\theoremstyle{definition}
\definecolor{myred}{RGB}{178,34,34}
\newtheorem{Definition}{Definition}
\newtheorem{Proposition}{Proposition}
\newtheorem{Theorem}{Theorem}
\newtheorem{Remark}{Remark}
\let\bs\boldsymbol
\def\llambda{{\bs \lambda}}
\def\xx{{\bs x}}
\def\uu{{\bs u}}
\def\zz{{\bs z}}
\def\sym{\text{Sym}}
\newcommand{\bL}{\reflectbox{$\mathsf{L}$}}
\newcommand{\bA}{\mathsf{A}}
\newcommand{\bT}{\mathsf{T}}
\newcommand{\lt}{\Gamma_{\llambda}}
\newcommand{\trwt}{\mathsf{W}}
\newcommand{\trwts}{\textsf{\textbf{W}}}
\newcommand{\atrwt}{\mathsf{W}^{\mathcal{A}}}
\newcommand{\tr}{\textsf{t}}
\newcommand{\trs}{\textsf{\textbf{t}}}
\newcommand{\Lie}{\mathrm{Lie}}
\newcommand{\Pic}{\mathrm{Pic}}
\newcommand{\stab}{\mathrm{Stab}}
\DeclareMathOperator\caret{\raisebox{0.75ex}{$\scriptstyle\wedge$}}
\newcommand{\pkg}[1]{\texttt{#1}}
\title{Elliptic stable envelopes of affine type A quiver varieties}
\author{Hunter Dinkins}
\begin{document}

\maketitle

\begin{abstract}
    We generalize Smirnov's formula for the elliptic stable envelopes of the Hilbert scheme of points in $\mathbb{C}^2$ to the case of affine type $A$ Nakajima quiver varieties constructed with positive stability condition. We allow for arbitrary choices of polarization and a fairly general choice of chamber. This paper is a companion to the Maple code developed by the author, which implements the formulas described in this paper and is available on the author's website\footnote{tarheels.live/dinkins/code}.
\end{abstract}

\tableofcontents

\section{Introduction}

Originally introduced by Maulik and Okounkov in \cite{MO}, stable envelopes have grown to become a rich object of study in algebraic geometry and geometric representation theory. Stable envelopes were originally defined for Nakajima quiver varieties in equivariant cohomology, though the definition has since been extended to equivariant $K$-theory and equivariant elliptic cohomology in \cite{pcmilect} and \cite{AOElliptic}. More recently, stable envelopes have been extended to the case of non-abelian group actions on quite general varieties, see \cite{indstab1} and \cite{indstab2}. 

Despite the level of generality in which elliptic stable envelopes can be defined, it is a nontrivial problem to write explicit formulas for them. 

Let us briefly survey what is known. In the original paper defining elliptic stable envelopes \cite{AOElliptic}, formulas were provided for hypertoric varieties, see also \cite{mstoric}. For cotangent bundles to partial flag varieties, formulas were written using the so-called elliptic weight functions \cite{RTV}. More generally, elliptic characteristic classes on generalized full flag varieties were studied in \cite{RWElliptic}. In the paper \cite{MirSym1}, stable envelopes for the 3d mirror dual of cotangent bundles to Grassmannians were calculated. The paper \cite{SmirnovElliptic} computes the elliptic stable envelope for the Hilbert scheme of points on $\mathbb{C}^2$, which was the first example for an affine type $A$ quiver variety. Recently, Botta provided a method to build the stable envelopes of arbitrary quiver varieties from those of quiver varieties with one framing \cite{Botta}. This, in combination with Smirnov's formula, provides formulas for the instanton moduli spaces.

The goal of this paper is to provide formulas for the elliptic stable envelopes of quiver varieties arising from finite or affine type $A$ quivers. As such, this paper generalizes all the known formulas for quiver varieties. 

Our formula closely resembles that of \cite{SmirnovElliptic} and we give a brief overview of it here. For fuller explanations of notation, see the main text. 

Let $\mathcal{M}$ be a Nakajima quiver variety arising from an affine type $A$ quiver with dimension vector $\mathsf{v}$ and framing dimension $\mathsf{w}$. There is a torus $\bT=\bA_{\mathsf{w}}\times\mathbb{C}^{\times}_{t_1} \times \mathbb{C}^{\times}_{t_2}$ which acts on $\mathcal{M}$ with finitely many fixed points. Fixed points can be indexed naturally by certain $|\mathsf{w}|$-tuples of partitions, which we write as $\llambda=(\lambda^{(i)}_{j})$. In section \ref{comb}, we describe a collection of \textit{admissible trees} inside a partition. We call the set of such trees $\Gamma_{\llambda}$. Let $\trs=(\tr^{(i)}_{j})$ be a $|\mathsf{w}|$-tuple of admissible trees such that $\tr^{(i)}_{j}$ is an admissible tree in $\lambda^{(i)}_{j}$. In section \ref{formula}, we associate a certain elliptic function 
$$
\trwt^{\lambda^{(i)}_j}_{\tr^{(i)}_{j}}:=\trwt^{\lambda^{(i)}_j}_{\tr^{(i)}_{j}}(\xx,\uu,\zz;q,t_1,t_2) 
$$
to $\tr^{(i)}_{j}$ and multiply these together to form
$$
\trwts^{\llambda}_{\trs}:=\trwts^{\llambda}_{\trs}(\xx,\uu,\zz;q,t_1,t_2):= \prod_{i=0}^{r-1} \prod_{j=1}^{\mathsf{w}_i}  \trwt^{\lambda^{(i)}_{j}}_{\tr^{(i)}_{j}}(\xx,\uu,\zz;q,t_1,t_2)
$$
The meaning of the variables are as follows:
\begin{itemize}
    \item $\xx$ stands for the collection of Chern roots of the tautological vectors bundles of $\mathcal{M}$.
    \item $\uu$ represents the equivariant parameters of the framing torus. The variables $t_1$ and $t_2$ are equivariant parameters on $\bT$ modulo the framing torus.
    \item $\zz$ stands for the K\"ahler (or dynamical) parameters, of which there is one for each vertex of the quiver.
    \item $q$ is the modular parameter of the elliptic curve defining the elliptic cohomology theory.
\end{itemize}
In addition, we define another elliptic function
$$
S_{\llambda}:=S_{\llambda}(\xx,\uu;q,t_1,t_2)
$$
which depends only on the fixed point $\llambda$ and not on the trees. All of these functions depend on  choice of chamber $\mathfrak{C}$ and polarization $T^{1/2}$. Then the main result of this paper is the following:
\begin{Theorem}[Theorem \ref{mainthm}]\label{mainthmintro}
The elliptic stable envelope of $\llambda \in \mathcal{M}^{\bT}$ is given by
$$
\stab_{T^{1/2},\mathfrak{C}}(\llambda)=\sym_{0}\,  \sym_{1} \, \ldots \, \sym_{r-1}\left( S_{\llambda} \sum_{\trs \in \lt} \trwts^{\llambda}_{\trs} \right)
$$
where $\sym_i$ is the symmetrization over the Chern roots of $\mathcal{V}_i$.
\end{Theorem}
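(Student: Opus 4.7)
My plan is to verify the three defining properties of the elliptic stable envelope in the sense of Aganagic–Okounkov \cite{AOElliptic}, which together uniquely characterize it: (i) the proposed formula is a holomorphic section of the line bundle on the elliptic cohomology scheme of $\mathcal{M}$ prescribed by $T^{1/2}$ and the K\"ahler data; (ii) its restriction to $\llambda$ equals the standard diagonal expression given by the product of tangent weights positive in $\fC$, suitably twisted; and (iii) for every other $\mmu \in \mathcal{M}^{\bT}$ in the attracting set of $\llambda$, the restriction to $\mmu$ lies in the strictly shifted line bundle determined by $\mmu$. The combinatorics of admissible trees from Section~\ref{comb} organizes the proof and provides the indexing framework.

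For (i) and (ii) I expect a largely mechanical verification. Each weight $\trwt^{\lambda^{(i)}_j}_{\tr^{(i)}_{j}}$ is built edge-by-edge from theta functions, so its quasi-periodicity under $q$-shifts of $\xx$, $\uu$, $\zz$ can be tracked locally; summing over $\trs$ and multiplying by $S_{\llambda}$ should assemble the correct line bundle. The symmetrizations $\sym_0 \cdots \sym_{r-1}$ introduce apparent poles in the Chern roots, which I expect to cancel pairwise between trees differing by a single local move, mirroring the ``resolution of poles'' mechanism that drives Smirnov's computation in \cite{SmirnovElliptic}. For the diagonal restriction, setting $\xx$ to the tautological weights of $\llambda$ should annihilate every tree except a distinguished one canonically aligned with the chamber $\fC$, whose surviving contribution paired with $S_{\llambda}$ produces the required normalization.

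The main obstacle is the triangularity condition (iii). Individual terms $\trwts^{\llambda}_{\trs}|_{\mmu}$ will not satisfy the required line bundle shift; the cancellations must emerge only after summing over $\trs \in \lt$ and applying the symmetrizations. I plan to handle this by partitioning $\lt$ into equivalence classes indexed by the ``moves'' of boxes that distinguish $\llambda$ from $\mmu$, and exhibiting a local telescoping identity on each class that either outputs a term already in the allowed shifted bundle or cancels with a neighboring term after symmetrization. Compared to Smirnov's single-partition Hilbert scheme case, the affine type $A$ setting forces admissible trees to be coordinated across all $|\mathsf{w}|$ framing components, and the interaction between different $\mathcal{V}_i$-roots symmetrized independently is where the combinatorial bookkeeping will be most delicate; this is the step I expect to occupy the bulk of the proof.

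As a sanity check along the way I would specialize to known cases: $r=1$ and $\mathsf{w}=(1)$ must reproduce \cite{SmirnovElliptic}; finite type $A$ with appropriate stability should recover the elliptic weight functions of \cite{RTV}; and Botta's inductive construction \cite{Botta} provides an independent route for arbitrary framing starting from the one-framing case. Agreement in these limits is both strong evidence for correctness of the signs and normalizations entering $S_{\llambda}$ and a natural consistency check for the Maple implementation accompanying the paper.
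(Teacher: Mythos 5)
Your proposal takes a genuinely different route from the paper. The paper does not verify the defining axioms of the elliptic stable envelope directly; instead it runs an abelianization argument following \cite{AOElliptic} and \cite{SmirnovElliptic}: it constructs the Martin diagram relating $\mathcal{M}$ to the hypertoric variety $\mathcal{AM}$, identifies each tuple of admissible trees $\trs$ with a torus-fixed point of the abelianized fixed locus $\mathcal{AM}_{\llambda}=\prod_{\lambda}\mathcal{A}_{\lambda}$, computes the stable envelopes of these hypertoric fixed points explicitly (Proposition \ref{trstab}, which is where the tree weights and the factors $S_1,S_2$ come from), shows that their sum pushes forward to $1$ on the fixed component (Proposition \ref{formalinverse}, via Smirnov's Theorem 5), and then invokes the triangle lemma together with the formula $\stab=\pi_*\circ \mathsf{j}_+^*\circ(\mathsf{j}_{-*})^{-1}\circ\stab'\circ\cdots$ of \cite{AOElliptic} to assemble the answer; the symmetrization and the denominator $S_3$ arise from the pushforward $\pi_*$ (Proposition \ref{maps}). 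In that approach the support and normalization axioms are inherited from the hypertoric case and the general machinery, so they are never checked by hand.

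The gap in your plan is exactly the step you flag as delicate: the triangularity condition (iii). You propose to partition $\lt$ into equivalence classes and exhibit a ``local telescoping identity,'' but you do not produce it, and that identity is the entire content of the theorem in the axiomatic approach. For affine type $A$ with several framings the restrictions $\trwts^{\llambda}_{\trs}\big|_{\mmu}$ involve coordinated choices of trees across all partitions and an interaction between the independently symmetrized Chern roots of the different $\mathcal{V}_i$; no direct combinatorial proof of the required cancellation is known even in the Hilbert scheme case $r=1$, $\mathsf{w}=(1)$, which is why Smirnov's paper, your stated model, also resorts to abelianization. A secondary concern is your claim in (ii) that the diagonal restriction kills all but one distinguished tree: what controls the diagonal is the interplay between the symmetrization, the vanishing theta factors in $S_1$ and $S_3$, and the substitution $x_a\mapsto\varphi^{\llambda}_a$, and in the paper this too is a consequence of the abelianization diagram rather than a termwise verification. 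As written, the proposal is a plausible program rather than a proof; to close it you would need either to carry out the abelianization argument or, at minimum, to reduce your condition (iii) to the single-partition hypertoric statement via a product decomposition analogous to Proposition \ref{factor}.
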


The proof of this theorem occupies section \ref{proof} and uses an abelianization argument similar to \cite{SmirnovElliptic}. As usual, this formula can be degenerated to provide $K$-theoretic stable envelopes, see section \ref{Ktheory}.

A linear type $A$ quiver variety can be though of as an affine type $A$ quiver variety with an additional node that has dimension 0. Thus Theorem \ref{mainthmintro} also provides a formula for linear type $A$ quiver varieties. Because the torus that act on quiver varieties arising from linear type $A$ quivers differs from the affine case, a slight reparametrization must be made to write the formulas for a linear type $A$ quiver in their usual form, see section \ref{linear}.

We have also developed a Maple package \pkg{EllipticStableEnvelopes} implementing these formulas which is explained in section \ref{pkgsection}. Computational access to stable envelopes is useful pedagogically and has aided the author in uncovering new results pertaining to 3d mirror symmetry in \cite{msflag} and \cite{dinksmir4}. We hope that other researchers will similarly find this computational tool helpful.

One fascinating and presently unexplored aspect of stable envelopes pertains to Cherkis bow varieties studied in \cite{Cherk1}, \cite{Cherk2}, \cite{Cherk3}, \cite{NakBow}. Bow varieties generalize Nakajima quiver varieties in (linear and affine) type $A$ and are expected to be closed under 3d mirror symmetry. 3d mirror symmetry is a conjectural relationship between varieties arising from certain quantum field theories. One expectation of 3d mirror symmetry is that the elliptic stable envelopes of 3d mirror dual varieties should coincide in a nontrivial way, see \cite{MirSym1}, \cite{MirSym2}, and \cite{msflag}. The recent work of Rim{\'a}nyi and Shou \cite{RSbows} studied the combinatorial aspects of Cherkis bow varieties and described many of the crucial geometric ingredients needed to compute with stable envelopes. Since bow varieties are expected to be closed under 3d mirror symmetry, it is of great interest to generalize the tools used to study elliptic stable envelopes of quiver varieties to the case of bow varieties. Since much of the combinatorics of bow varieties parallels that of quiver varieties, we hope that this paper will be a step in this direction.

\subsection{Acknowledgements}
We would like to thank Andrey Smirnov, whose work on the elliptic stable envelopes of the Hilbert scheme of points in $\mathbb{C}^2$ provided the foundation for nearly all of the constructions in this paper. We also thank him for sharing Maple code, some of which was developed and incorporated into the package described here. This work was partially support by NSF grant DMS-2054527.

\subsection{Notation}

We will use the notation $\vartheta$ and $\varphi$ for the functions
\begin{align}\label{thetaphi} \nonumber
    \varphi(x)&:=\prod_{i=0}^{\infty}(1-x q^i) \\
    \vartheta(x)&:=(x^{1/2}-x^{-1/2})\varphi(qx)\varphi(q/x)
\end{align}
which satisfy the transformation properties
\begin{align}\label{thetaphi2} \nonumber
    \varphi(q x) &= (1-x)^{-1} \varphi(x) \\
    \vartheta(q x) &=-\frac{1}{\sqrt{q} x} \vartheta(x)
\end{align}

\section{Combinatorial background}\label{comb}

\subsection{Partitions}
Let $\lambda=(\lambda_1\geq \lambda_2 \geq \ldots \geq \lambda_{l}\geq 0)$ be a partition. We identify it with its Young diagram, which is the set of points
\begin{equation}\label{Yng}
\{(x,y)\in \mathbb{Z}^{2}_{> 0} \, \mid \, 1\leq x \leq l , 1 \leq y \leq \lambda_i\} 
\end{equation}
As is standard, we refer to the points in the Young diagram as ``boxes." We call the box with coordinates $(1,1)$ the corner box. Let $a\in \lambda$ be a box with coordinates $(x,y)$. The content and height of $a$ are defined to be
\begin{align*}
    c_{a}&=x-y \\
    h_{a}&=x+y-2
\end{align*}
We will often think of our Young diagrams as being rotated to the left by $45^{\circ}$. With this convention, the content and height give the horizontal and vertical coordinates, respectively, normalized so that the corner box of $\lambda$ has content 0 and height 0.

\subsection{$(\mathsf{v},\mathsf{w})$-tuples of partitions}\label{vwtuples}
Let $r \in \mathbb{N}$ and let $Q$ be the affine type $A$ quiver with vertex set $I=\{0,\ldots,r-1\}$. We always assume the arrows are oriented from left to right. Let $\mathsf{v},\mathsf{w}\in \mathbb{N}^{I}$, which will later be interpreted as the dimension and framing dimension of a Nakajima quiver variety. It will be convenient below to consider indices modulo $r$. We will denote this with an overline:
$$
\overline{i}:= i \mod r
$$

In what follows, we will consider partitions labeled by a vertex $i$ and an integer $j$ such that $1\leq j \leq \mathsf{w}_i$. For a box in such a partition, we define the height function to be the same as in the previous section. However, we translate the content function such that the content of the corner box is $i$. This means that if $(x,y)$ are the coordinates of a box $a$ in such a partition, then
$$
c_a=x-y+i
$$
We assume this convention whenever we are considering partitions indexed in this way.
\begin{Definition}\label{vwpart}
A $(\mathsf{v},\mathsf{w})$-tuple of partitions is a $(\mathsf{w}_0+\ldots \mathsf{w}_{r-1})$-tuple of partitions 
$$
\llambda=\left(\lambda^{(i)}_{j}\right)_{\substack{0 \leq i \leq r-1 \\ 1 \leq j \leq \mathsf{w}_i}}
$$
such that the total number of boxes in all partitions with content congruent to $m$ modulo $r$ is equal to $\mathsf{v}_m$ for any $m \in \{0,\ldots,r-1\}$. In other words, we require that the set
$$
\bigcup_{\substack{0 \leq i \leq r-1 \\ 1 \leq j \leq \mathsf{w}_i}}\{a \in \lambda^{(i)}_{j} \, \mid \, \overline{c_a}=m \}
$$
has size $\mathsf{v}_m$.
\end{Definition}

\begin{Remark}
As a tuple, a $(\mathsf{v},\mathsf{w})$-tuple of partitions is sensitive to the order in which the partitions appear.
\end{Remark}

It is useful to rotate the partitions and arrange them above the universal cover of the affine quiver which is the $A_{\infty}$ quiver. The vertices of the $A_{\infty}$ quiver should be identified modulo $r$ and the total number of boxes above any collection of identified vertices should equal the dimension of the vertex. See Figure \ref{fig1}.

\begin{figure}[htbp]
    \centering
\begin{tikzpicture}[scale=0.98,roundnode/.style={circle,fill,inner sep=2.5pt},refnode/.style={circle,inner sep=0pt},squarednode/.style={rectangle,fill,inner sep=3pt}] 
\node[roundnode,label=above:{6}] (N1) at (-4,0) {};
\node[roundnode,label=above:{5}] (N2) at (-3,0){};
\node[roundnode,label=above:{5}] (N3) at (-2,0){};
\node[roundnode,label=above:{4}] (N4) at (-1,0){};
\node[roundnode,label=above:{6}] (V1) at (0,0) {};
\node[roundnode,label=above:{5}] (V2) at (1,0){};
\node[roundnode,label=above:{5}] (V3) at (2,0){};
\node[squarednode,label=below:{$2$}] (F1) at (0,-1){};
\node[roundnode,label=above:{4}](V4) at (3,0){};
\node[squarednode,label=below:{$1$}] (F2) at (1,-1){};
\node[squarednode,label=below:{$1$}] (F3) at (3,-1){};
\node[roundnode,label=above:{6}] (V5) at (4,0){};
\node[roundnode,label=above:{5}] (V6) at (5,0){};
\node[roundnode,label=above:{5}] (V7) at (6,0){};
\node[roundnode,label=above:{4}] (V8) at (7,0){};

\begin{scope}[shift={(0,0)}]
\draw[thick,-](0,0)--(-4,4)--(-3,5)--(-2,4)--(-1,5)--(1,3)--(2,4)--(3,3)--(0,0);
\draw[thick,-](1,1)--(-2,4);
\draw[thick,-](2,2)--(1,3);
\draw[thick,-](-1,1)--(1,3);
\draw[thick,-](-2,2)--(0,4);
\draw[thick,-](-3,3)--(-2,4);
\end{scope}

\begin{scope}[shift={(0,4.5)}]
\draw[thick,-](0,0)--(-2,2)--(0,4)--(1,3)--(2,4)--(3,3)--(0,0);
\draw[thick,-](1,1)--(-1,3);
\draw[thick,-](2,2)--(1,3);
\draw[thick,-](-1,1)--(1,3);
\end{scope}

\begin{scope}[shift={(0,8)}]
\draw[thick,-](1,0)--(-1,2)--(0,3)--(2,1)--(1,0);
\draw[thick,-](0,1)--(1,2);
\end{scope}

\begin{scope}[shift={(0,8.5)}]
\draw[thick,-](3,0)--(1,2)--(2,3)--(3,2)--(5,4)--(6,3)--(3,0);
\draw[thick,-](4,1)--(3,2);
\draw[thick,-](5,2)--(4,3);
\draw[thick,-](2,1)--(3,2);
\end{scope}

\draw[thick, ->] (N1) -- (N2);
\draw[thick, ->] (N2) -- (N3);
\draw[thick, ->] (N3) -- (N4);
\draw[thick, ->] (N4) -- (V1);
\draw[thick, ->] (V1) -- (V2);
\draw[thick, ->] (V2) -- (V3);
\draw[thick, ->] (V3) -- (V4);
\draw[thick, ->] (V4) -- (V5);
\draw[thick, ->] (V5) -- (V6);
\draw[thick, ->] (V6) -- (V7);
\draw[thick, ->] (V7) -- (V8);
\draw[thick, ->] (V1) -- (F1);
\draw[thick, ->] (V2) -- (F2);
\draw[thick, ->] (V4) -- (F3);


\draw[dotted, -](0,0)--(0,12.5);
\draw[dotted, -](1,0)--(1,12.5);
\draw[dotted, -](2,0)--(2,12.5);
\draw[dotted, -](3,0)--(3,12.5);
\draw[dotted, -](4,0)--(4,12.5);
\draw[dotted, -](5,0)--(5,12.5);
\draw[dotted, -](6,0)--(6,12.5);
\draw[dotted, -](7,0)--(7,12.5);
\draw[dotted, -](-1,0)--(-1,12.5);
\draw[dotted, -](-2,0)--(-2,12.5);
\draw[dotted, -](-3,0)--(-3,12.5);
\draw[dotted, -](-4,0)--(-4,12.5);
\end{tikzpicture}

\begin{tikzpicture}[scale=1,roundnode/.style={circle,fill,inner sep=2.5pt},refnode/.style={circle,inner sep=0pt},squarednode/.style={rectangle,fill,inner sep=3pt}] 

 \node[roundnode,label=above:{6}] (V1) at (0,0) {};
\node[roundnode,label=above:{5}] (V2) at (1,0){};
\node[roundnode,label=above:{5}] (V3) at (2,0){};
\node[squarednode,label=below:{$2$}] (F1) at (0,-1){};
\node[roundnode,label=above:{4}](V4) at (3,0){};
\node[squarednode,label=below:{$1$}] (F2) at (1,-1){};
\node[squarednode,label=below:{$1$}] (F3) at (3,-1){};

\draw[thick, ->] (V1) -- (V2);
\draw[thick, ->] (V2) -- (V3);
\draw[thick, ->] (V3) -- (V4);
\draw[thick,dashed, ->] (V4)[bend right=60] to (V1);

\draw[thick, ->] (V1) -- (F1);
\draw[thick, ->] (V2) -- (F2);
\draw[thick,->] (V4)--(F3);
\end{tikzpicture}
 \caption{The affine type $A$ quiver with 4 vertices (bottom), along with its universal cover (top). The dimension is $\mathsf{v}=(6,5,5,4)$ and the framing dimension is $\mathsf{w}=(2,1,0,1)$.The $(\mathsf{v},\mathsf{w})$-tuple of partitions shown here is $\llambda=((4,3,1),(2,2,1),(2),(2,1,1))$.}\label{fig1}
\end{figure}
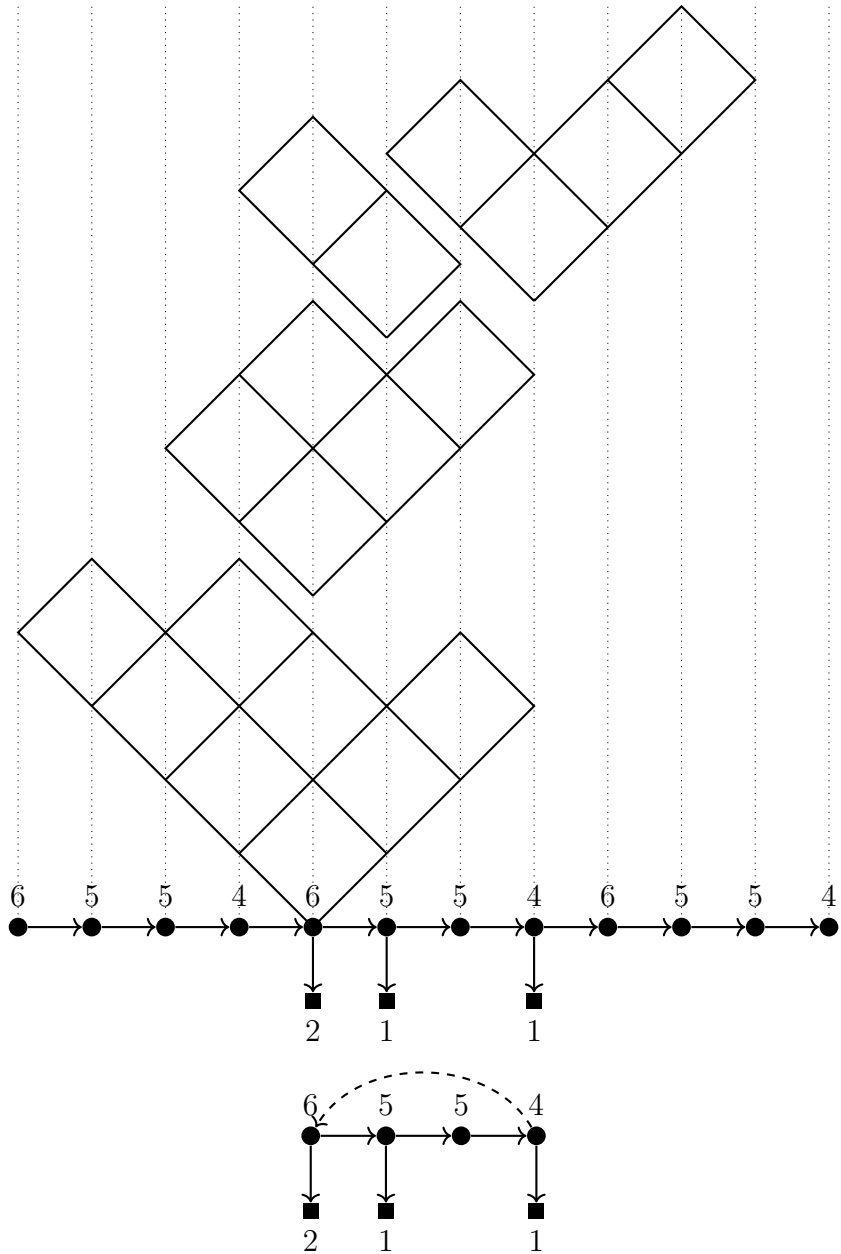

\subsection{Trees in partitions}
Two boxes in a partition $\lambda$ with coordinates $(x_1,y_1)$ and $(x_2,y_2)$ are adjacent if
$$
x_1=x_2 \text{ and } y_1=y_2 \pm 1
$$
or 
$$
y_1=y_2 \text{ and } x_1=x_2 \pm 1
$$

\begin{Definition}
A tree $\tr$ in a partition $\lambda$ is a tree (a graph with no cycles) whose vertices consist of all boxes in $\lambda$ and whose edges connect only adjacent boxes. A rooted tree in $\lambda$ is a tree in $\lambda$ with a distinguished box, called the root box.
\end{Definition}

A tree $\tr$ is totally determined by its set of edges, which, abusing notation, we will also denote by $\tr$.

An orientation on a tree is a choice of direction on each edge, or equivalently, a choice of two functions $h,t$ from edges to vertices, which sends an edge $e$ to one of the two vertices attached to it such that $h(e)\neq t(e)$. We refer to $h$ and $t$ as the head and tail functions and think of edges as proceeding from tail to head.

\begin{Definition}
A canonically oriented rooted tree in a partition $\lambda$ is a rooted tree $\tr$ in lambda with all edges oriented away from the root box. In other words, the root box does not appear in the image of the head function, and for all edges $e_1$ and $e_2$, if $h(e_1)=h(e_2)$, then $e_1=e_2$.
\end{Definition}

Any subtree of an oriented tree inherits an orientation. Given a canonically oriented rooted tree $\tr$ in $\lambda$ and a box $a \in \lambda$, there is a natural subtree of $\lambda$ rooted at $a$ with orientation induced by that of $\tr$.

\begin{Definition}
Given a rooted canonically oriented tree $\tr$ inside a partition $\lambda$ and a box $a \in \lambda$, we define $[a,\tr]$ to be the set of boxes in $\lambda$ appearing as vertices in the natural subtree of $\tr$ rooted at $a$. So $a \in [a, \tr]$, as well as any boxes that can be obtained by following edges that start at $a$ from tail to head.
\end{Definition}

\subsection{Admissible trees in partitions}
Fix a partition $\lambda$. 

\begin{Definition}
The skeleton of $\lambda$, denoted $\Sigma_{\lambda}$, is the graph with vertex set given by boxes in $\lambda$ and edge set given by all edges that connect adjacent boxes.
\end{Definition}

\begin{Definition}
A $\bL$-shaped subgraph of $\Sigma_{\lambda}$ is a subgraph of $\Sigma_{\lambda}$ with two edges, which are of the form 
$$
e_1=\{(x,y),(x+1,y)\} \quad e_2=\{(x+1,y),(x+1,y+1)\}
$$
\end{Definition}

\begin{Proposition}
There are 
$$
\sum_{i}(m_i-1)
$$
$\bL$-shaped subraphs of $\Sigma_{\lambda}$, where $m_i:=\left|\{a \in \lambda \, \mid \, c_a=i\}\right|$
\end{Proposition}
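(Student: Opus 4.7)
The approach I would take is to establish a bijection between $\bL$-shaped subgraphs of $\Sigma_\lambda$ and pairs of ``diagonally consecutive'' boxes in $\lambda$, where by diagonally consecutive I mean boxes $(x,y)$ and $(x+1,y+1)$ that are both in $\lambda$ (and hence necessarily share the same content $x-y$).

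The bijection sends an $\bL$-shape with edges $e_1=\{(x,y),(x+1,y)\}$ and $e_2=\{(x+1,y),(x+1,y+1)\}$ to the pair $\{(x,y),(x+1,y+1)\}$. To see it is well defined, note the three boxes involved all lie in $\lambda$ by the definition of an $\bL$-shape, and the endpoints of the pair have content $x-y$. The map is clearly injective since the middle box $(x+1,y)$ is recovered as the unique box adjacent to both $(x,y)$ and $(x+1,y+1)$ that lies strictly to the right of $(x,y)$. For surjectivity the key observation is that the Young-diagram condition forces the middle box: if $(x+1,y+1)\in\lambda$ then $\lambda_{x+1}\geq y+1$, so in particular $(x+1,y)\in\lambda$, so the two edges $e_1,e_2$ are genuine edges of $\Sigma_\lambda$ and do form an $\bL$-shape.

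Next I would fix a content $i$ and count diagonally consecutive pairs on the diagonal $\{a\in\lambda\mid c_a=i\}$. The boxes of a given content form a set of the form $\{(k+\max(i,0),k+\max(-i,0))\}$ for $k$ in some range, and the Young-diagram inequalities $\lambda_{x+1}\leq\lambda_x$ imply that this set is contiguous along the diagonal: indeed, if two boxes of content $i$ lie in $\lambda$, every box of content $i$ lying diagonally between them also lies in $\lambda$, by the same convexity argument as above. Therefore on a diagonal with $m_i$ boxes, the number of consecutive pairs is exactly $m_i-1$ (interpreted as $0$ when $m_i=0$).

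Summing over all diagonals gives the stated count $\sum_i(m_i-1)$. The only mild subtlety here — and the one place where I would be careful to spell things out — is the argument that boxes of fixed content form a contiguous run on the diagonal, since without this the pair-counting on each diagonal would not be simply $m_i-1$. Everything else is a direct verification from the definition of $\bL$-shape and the coordinate description \eqref{Yng} of the Young diagram.
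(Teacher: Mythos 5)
Your proof is correct, but it takes a different route from the paper, which simply asserts that the count follows ``by induction on the number of boxes of $\lambda$'' and gives no further detail. Your argument replaces that induction with an explicit bijection between $\bL$-shaped subgraphs and diagonal dominoes $\{(x,y),(x+1,y+1)\}\subset\lambda$, and the two nontrivial points — that $(x+1,y+1)\in\lambda$ forces the corner box $(x+1,y)\in\lambda$ (surjectivity), and that each diagonal $\{a\in\lambda\mid c_a=i\}$ is a contiguous run so that it contains exactly $m_i-1$ consecutive pairs — are both correctly identified and correctly justified from the monotonicity $\lambda_{x+1}\le\lambda_x$. What the bijection buys over the induction is transparency: it explains \emph{why} the answer is $\sum_i(m_i-1)$ (each $\bL$-shape is anchored to a pair of adjacent boxes on a single diagonal), and it localizes the count diagonal by diagonal, which is consonant with how the rest of the paper organizes boxes by content; the inductive argument is shorter to state but hides this structure. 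The only caveat, which you already flag, is the convention that terms with $m_i=0$ contribute $0$ rather than $-1$ to the sum — the paper implicitly intends the sum to range over contents actually occurring in $\lambda$, and your reading is the right one.
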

\begin{proof}
This can be proven easily by, for example, induction on the number of boxes of $\lambda$.
\end{proof}

We index the $\bL$-shaped subgraphs in $\lambda$ as $\gamma_1,\ldots, \gamma_m$. 
\begin{Proposition}
Let $e_i$, $i \in \{1, \ldots, m\}$ be edges such that $e_i$ is an edge of $\gamma_i$. Then $\Sigma_{\lambda}\setminus \{e_1,e_2,\ldots e_n\}$ is a tree in $\lambda$.
\end{Proposition}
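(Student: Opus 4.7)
The plan is a two-step argument: first show the punctured graph has exactly $|\lambda|-1$ edges by Euler's formula, then show it is acyclic by extracting, from any hypothetical surviving cycle, a ``southeast'' face whose $\bL$-edges must lie on the cycle. Combining these gives a forest on $|\lambda|$ vertices with $|\lambda|-1$ edges, which is automatically connected and hence a tree.

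For the edge count, I would first observe that $\Sigma_\lambda$ is a connected planar graph in its natural integer embedding; connectedness follows because any two boxes of a partition are joined by a path of adjacent boxes inside $\lambda$. Its bounded faces are precisely the $2\times 2$ blocks $\{(x,y),(x+1,y),(x,y+1),(x+1,y+1)\}$ whose four corners all lie in $\lambda$, and each such block corresponds bijectively to the $\bL$-shape with corner $(x+1,y)$, so the number of bounded faces equals $m$. Euler's formula $V-E+F=2$ then gives $|E(\Sigma_\lambda)|=|\lambda|-1+m$. A quick case check shows that each edge of $\Sigma_\lambda$ belongs to at most one $\bL$-shape: a horizontal edge $\{(a,b),(a+1,b)\}$ can only be the horizontal leg of the $\bL$ at corner $(a+1,b)$, and analogously for vertical edges. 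Hence $e_1,\ldots,e_m$ are pairwise distinct and $\Sigma_\lambda\setminus\{e_1,\ldots,e_m\}$ has exactly $|\lambda|-1$ edges.

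The main obstacle is the acyclicity step, where planarity must be invoked. Suppose for contradiction that some cycle $C$ survives in $\Sigma_\lambda\setminus\{e_1,\ldots,e_m\}$. As a simple cycle in the planar graph $\Sigma_\lambda$, $C$ bounds a nonempty region $R$ that is a union of bounded faces, i.e., a union of $2\times 2$ blocks. Choose a block $B\subset R$ whose bottom-left corner $(x,y)$ has $y$ minimal, and among those with $x$ maximal. By minimality of $y$, the block directly below $B$ is absent from $R$ (either it fails to be a face of $\Sigma_\lambda$ at all, or $R$ simply omits it), so the bottom edge $\{(x,y),(x+1,y)\}$ of $B$ lies on the boundary $C$; by maximality of $x$, the block immediately to the right of $B$ is likewise absent from $R$, so the right edge $\{(x+1,y),(x+1,y+1)\}$ of $B$ also lies on $C$. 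These two edges are exactly the two edges of $\gamma_B$, so the removed edge $e_B$ is one of them, forcing $e_B\in C$ and contradicting $C\subset\Sigma_\lambda\setminus\{e_1,\ldots,e_m\}$. Since a forest on $|\lambda|$ vertices with $|\lambda|-1$ edges must be connected, acyclicity together with the edge count completes the proof.
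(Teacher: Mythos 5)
Your proof is correct, and it supplies substantially more than the paper does: the paper's own ``proof'' is a one-line assertion that the claim ``follows easily from the construction of $e_i$ and $\bL$-shaped subgraphs,'' with no argument given. Your route --- Euler's formula to pin down the edge count $|E(\Sigma_\lambda)|=|\lambda|-1+m$, distinctness of the removed edges via the observation that each edge lies in at most one $\bL$-shape, and then the extremal-block argument (minimal $y$, then maximal $x$) showing that any surviving cycle would have to contain both legs of some $\bL$ --- is a complete and clean way to fill this in. Two small points deserve a sentence each if you write this up. First, the identification of $\bL$-shapes with bounded faces uses that $\lambda$ is a Young diagram: the definition of an $\bL$-shape only posits the three boxes $(x,y)$, $(x+1,y)$, $(x+1,y+1)$, but the fourth corner $(x,y+1)$ is then automatic since $\lambda_x\geq\lambda_{x+1}\geq y+1$. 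Second, the claim that the bounded faces of the natural embedding of $\Sigma_\lambda$ are \emph{exactly} the full $2\times 2$ blocks is where simple-connectivity of the Young diagram (it is an order ideal in $\mathbb{Z}_{>0}^2$, so the polyomino has no holes) enters; for a general polyomino this would fail. Neither point is a gap in substance, but both are load-bearing for the Euler count and the ``$R$ is a union of blocks'' step, so they are worth making explicit. As a comparison, the more pedestrian alternative the paper likely has in mind is to check connectivity directly (every box reaches the corner box along remaining edges, e.g.\ by always following the surviving leg of each $\bL$ toward smaller content/height) together with the same edge count; your planar-duality argument buys a cleaner acyclicity proof at the cost of invoking the Jordan curve theorem.
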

\begin{proof}
This follows easily from the construction of $e_i$ and $\bL$-shaped subgraphs.
\end{proof}

We refer to the trees obtained as in the previous proposition as \textit{admissible trees}.
\begin{Definition}\label{adtr}
Let $\Gamma_{\lambda}$ be the set of $2^m$ canonically oriented trees in $\lambda$ rooted at the corner box whose underlying edge set forms an admissible tree.
\end{Definition}


\begin{Remark}
In what follows, we will assume that all trees in partitions are rooted at the corner box and are canonically oriented. We will henceforth refer to such trees simply as trees, and will call the corner box the root box.
\end{Remark}

Later we will need the following function:
\begin{Definition}\label{kappa}
Let $\tr$ be a tree in $\lambda$. With the identification of the Young diagram of $\lambda$ as in (\ref{Yng}), we define $\kappa(\tr)$ to be the number of vertical arrows in $\tr$ directed down plus the number of horizontal edges in $\tr$ directed left.
\end{Definition}

An example of admissible trees is given in Figure \ref{fig2}.
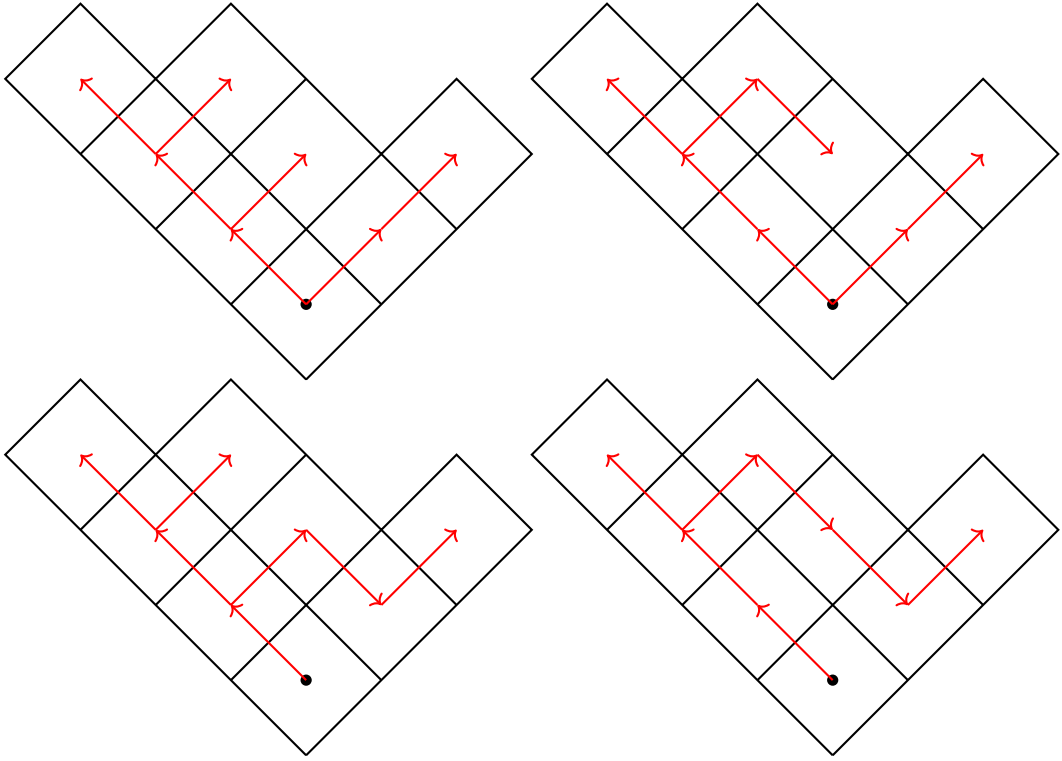
\begin{figure}[ht]
    \centering
    \begin{tikzpicture}
    
   \begin{scope}[shift={(-4,0)}]
   \node at (0,1)[circle,fill,inner sep=1.5pt]{};
\draw[thick,-](0,0)--(-4,4)--(-3,5)--(-2,4)--(-1,5)--(1,3)--(2,4)--(3,3)--(0,0);
\draw[thick,-](1,1)--(-2,4);
\draw[thick,-](2,2)--(1,3);
\draw[thick,-](-1,1)--(1,3);
\draw[thick,-](-2,2)--(0,4);
\draw[thick,-](-3,3)--(-2,4);
\draw[thick,red,->](0,1)--($(0,1)+(1,1)$);
\draw[thick,red,->]($(0,1)+(1,1)$)--($(0,1)+(2,2)$);
\draw[thick,red,->](0,1)--($(0,1)+(-1,1)$);
\draw[thick,red,->](-1,2)--($(0,1)+(0,2)$);
\draw[thick,red,->](-1,2)--(-2,3);
\draw[thick,red,->](-2,3)--(-3,4);
\draw[thick,red,->](-2,3)--(-1,4);
\end{scope}

   \begin{scope}[shift={(3,0)}]
    \node at (0,1)[circle,fill,inner sep=1.5pt]{};
\draw[thick,-](0,0)--(-4,4)--(-3,5)--(-2,4)--(-1,5)--(1,3)--(2,4)--(3,3)--(0,0);
\draw[thick,-](1,1)--(-2,4);
\draw[thick,-](2,2)--(1,3);
\draw[thick,-](-1,1)--(1,3);
\draw[thick,-](-2,2)--(0,4);
\draw[thick,-](-3,3)--(-2,4);
\draw[thick,red,->](0,1)--(1,2);
\draw[thick,red,->](1,2)--(2,3);
\draw[thick,red,->](0,1)--(-1,2);
\draw[thick,red,->](-1,4)--(0,3);
\draw[thick,red,->](-1,2)--(-2,3);
\draw[thick,red,->](-2,3)--(-3,4);
\draw[thick,red,->](-2,3)--(-1,4);
\end{scope}

 \begin{scope}[shift={(-4,-5)}]
  \node at (0,1)[circle,fill,inner sep=1.5pt]{};
\draw[thick,-](0,0)--(-4,4)--(-3,5)--(-2,4)--(-1,5)--(1,3)--(2,4)--(3,3)--(0,0);
\draw[thick,-](1,1)--(-2,4);
\draw[thick,-](2,2)--(1,3);
\draw[thick,-](-1,1)--(1,3);
\draw[thick,-](-2,2)--(0,4);
\draw[thick,-](-3,3)--(-2,4);
\draw[thick,red,->](0,3)--(1,2);
\draw[thick,red,->](1,2)--(2,3);
\draw[thick,red,->](0,1)--(-1,2);
\draw[thick,red,->](-1,2)--(0,3);
\draw[thick,red,->](-1,2)--(-2,3);
\draw[thick,red,->](-2,3)--(-3,4);
\draw[thick,red,->](-2,3)--(-1,4);
\end{scope}

 \begin{scope}[shift={(3,-5)}]
  \node at (0,1)[circle,fill,inner sep=1.5pt]{};
\draw[thick,-](0,0)--(-4,4)--(-3,5)--(-2,4)--(-1,5)--(1,3)--(2,4)--(3,3)--(0,0);
\draw[thick,-](1,1)--(-2,4);
\draw[thick,-](2,2)--(1,3);
\draw[thick,-](-1,1)--(1,3);
\draw[thick,-](-2,2)--(0,4);
\draw[thick,-](-3,3)--(-2,4);
\draw[thick,red,->](0,3)--(1,2);
\draw[thick,red,->](1,2)--(2,3);
\draw[thick,red,->](0,1)--(-1,2);
\draw[thick,red,->](-1,4)--(0,3);
\draw[thick,red,->](-1,2)--(-2,3);
\draw[thick,red,->](-2,3)--(-3,4);
\draw[thick,red,->](-2,3)--(-1,4);
\end{scope}

\end{tikzpicture}
    \caption{The four admissible trees inside the partition $(4,3,1)$. The root box is indicated by the black dot, and the canonical orientation is shown. From top to bottom and left to right, the values of the function $\kappa$ are $1, -1, -1$, and $1$.}
    \label{fig2}
\end{figure}

\section{Affine type \texorpdfstring{$A$}{A} quiver varieties}
\subsection{Definition of quiver varieties}\label{defs}
Fix $r \in \mathbb{N}$. In this paper, we study the quiver varieties associated to the affine type $A$ quiver $Q$ with vertex set $I=\{0,1,\ldots,r-1\}$ and edges $i\to i+1$. In this section, we understand all indices to be taken modulo $r$.

Let $\mathsf{v},\mathsf{w}\in \mathbb{Z}^{I}_{\geq 0}$ and fix complex vector spaces $V_i$ and $W_i$ such that $\dim V_i=\mathsf{v}_i$ and $\dim W_i=\mathsf{w}_i$. Let
$$
M:=M(\mathsf{v},\mathsf{w})= \bigoplus_{i=0}^{r-1} Hom(V_i, V_{i+1})  \oplus \bigoplus_{i=0}^{r-1} Hom(W_i,V_i)
$$ 
The group $G_{\mathsf{v}}=\prod_{i=0}^{r-1} GL(V_i)$ acts naturally on $M$ and induces an action on $T^*M$. We identify
\begin{multline*}
    T^*M= M \oplus M^* = \bigoplus_{i=0}^{r-1} Hom(V_i, V_{i+1})  \oplus \bigoplus_{i=0}^{r-1} Hom(V_{i+1}, V_{i})  \\ \oplus \bigoplus_{i=0}^{r-1} Hom(W_i,V_i)  \bigoplus_{i=0}^{r-1} Hom(V_i,W_i)
\end{multline*}
using the trace pairing.

With respect to this decomposition, we denote elements of $T^*M$ by $(X_i,Y_i,I_i,J_i)_{i \in I}$, where
\begin{align*}
    X_i&:V_i \to V_{i+1} \\
    Y_i&: V_{i+1} \to V_{i} \\
    I_i&: W_i \to V_i \\
    J_i&: V_i \to W_i
\end{align*}
Then $(g_i) \in G_{\mathsf{v}}$ acts by 
$$
(g_i) \cdot (X_i,Y_i,I_i,J_i) = (g_{i+1}X_i g_{i}^{-1},g_{i}Y_{i} g_{i+1}^{-1}, g_i I_i, J_i g_i^{-1})
$$
The action of $G_{\mathsf{v}}$ induces a moment map
$$
\mu: T^*M \to \mathfrak{g}_{\mathsf{v}}^*, \quad \text{where}\quad  \mathfrak{g}_{\mathsf{v}}=\Lie(G_{\mathsf{v}})
$$
We choose the cocharacter of $G_{\mathsf{v}}$ given by
$$
\theta: G_v \to \mathbb{C}^{\times}, \quad (g_i) \mapsto \prod_{i \in I} \det g_i
$$
\begin{Definition}[\cite{Nak1}, \cite{NakALE}, \cite{GinzburgLectures}]
The quiver variety associated to the choice of $r$, $\mathsf{v}$, $\mathsf{w}$, and $\theta$ is the geometric invariant theory quotient
$$
\mathcal{M}_{\theta}(\mathsf{v},\mathsf{w}):= \mu^{-1}(0)/\!\!/\!\!_{\theta} G_{\mathsf{v}} = \mu^{-1}(0)^{\theta-ss}/G_{\mathsf{v}}
$$
\end{Definition}
In this paper, we will only work with the positive stability condition $\theta$. When the choice of dimensions $\mathsf{v}$ and $\mathsf{w}$ are fixed, we will often denote the quiver variety by $\mathcal{M}(\mathsf{v},\mathsf{w})$ or just $\mathcal{M}$ with the understanding that the stability condition $\theta$ is fixed as in this section.

\begin{Proposition}[\cite{GinzburgLectures} Proposition 5.1.5] \label{stability}
A tuple $(X_i,Y_i,I_i,J_i)\in T^*M$ is $\theta$-semistable if and only if there is no collection of proper subspaces $(S_i)_{i \in I}$ of $(V_i)_{i \in I}$ stable under $X_i$ and $Y_i$ containing the image of $I_i$.
\end{Proposition}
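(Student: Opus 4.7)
The plan is to deduce the proposition from the Hilbert--Mumford numerical criterion (in the form due to King) applied to $G_{\mathsf{v}}$ acting on $T^*M$ linearized by the character $\theta$. A point $(X,Y,I,J)$ is $\theta$-semistable precisely when $\langle \theta, \lambda \rangle \geq 0$ for every one-parameter subgroup $\lambda \colon \mathbb{C}^\times \to G_{\mathsf{v}}$ such that $\lim_{t \to 0} \lambda(t)\cdot(X,Y,I,J)$ exists in $T^*M$. Such a $\lambda$ is the same data as a $\mathbb{Z}$-grading $V_i = \bigoplus_{n} V_i(n)$ of each $V_i$, and since $\theta = \prod_i \det$, one has $\langle \theta, \lambda \rangle = \sum_{i,n} n \dim V_i(n)$.

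The first step is to translate existence of the limit into filtration conditions. A direct weight computation (using that $W_i$ carries the trivial grading because $\lambda$ lies in $G_{\mathsf{v}}$) shows that, writing $V_i^{\geq n} := \bigoplus_{m \geq n} V_i(m)$, the limit exists if and only if $X_i$ and $Y_i$ preserve the filtration $V_i^{\geq \bullet}$, the image of $I_i$ is contained in $V_i^{\geq 0}$, and $J_i$ vanishes on $V_i^{\geq 1}$.

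For the implication ``$\theta$-semistable $\Rightarrow$ no such subcollection,'' I would argue by contrapositive. Given proper subspaces $S_i \subseteq V_i$ (with $S_i \subsetneq V_i$ for at least one index) that are $X_i, Y_i$-stable and contain $\mathrm{im}(I_i)$, choose complements $V_i = S_i \oplus C_i$ and take $\lambda$ acting as the identity on $S_i$ and as $t^{-1}$ on $C_i$. Stability of $S_i$ under $X_i, Y_i$ kills the off-diagonal weight components of these maps, so the limit exists; the containment $\mathrm{im}(I_i) \subseteq S_i$ handles the condition on $I_i$; and the condition on $J_i$ is automatic since no weights are positive. The pairing then evaluates to $-\sum_i \dim C_i < 0$, contradicting semistability.

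For the converse, given a $\lambda$ with limit existing but $\langle \theta, \lambda \rangle < 0$, I would take $S_i := V_i^{\geq 0}$. The filtration conditions of the previous step immediately give $X_i, Y_i$-stability and $\mathrm{im}(I_i) \subseteq S_i$, while $\langle \theta, \lambda \rangle < 0$ forces some $V_i(n)$ with $n<0$ to be nonzero, making at least one $S_i$ a proper subspace. The only real obstacle is keeping the sign conventions straight: one must track carefully which direction of each filtration is preserved by each of the four structure maps under the chosen $G_{\mathsf{v}}$-action. It is then a pleasant feature that the $J_i$ condition drops out of the final statement, being automatic for gradings concentrated in nonpositive weights, and that the $Y_i$ condition follows from exactly the same weight analysis as the $X_i$ condition.
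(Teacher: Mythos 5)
Your argument is correct. Note that the paper does not prove this proposition at all --- it is quoted from \cite{GinzburgLectures}, Proposition 5.1.5 --- and your proof is essentially the standard one given there and in King's work: translate one-parameter subgroups of $G_{\mathsf{v}}$ into gradings, characterize existence of the limit by the filtration conditions on $X_i$, $Y_i$, $I_i$, $J_i$, and pair against $\theta=\prod_i\det$. Your sign bookkeeping is consistent (the $J_i$ condition is indeed vacuous for the destabilizing subgroups you construct, and ``proper'' is correctly read as ``the collection $(S_i)$ is not all of $(V_i)$''), so there is nothing to object to.
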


\subsection{Torus action}
The variety $\mathcal{M}_{\theta}(\mathsf{v},\mathsf{w})$ comes equipped with several torus actions. We define the framing torus to be the maximal torus
$$
\bA_{\mathsf{w}} \subset \prod_{i \in I} GL(W_i)
$$
The framing torus naturally acts on $M(\mathsf{v},\mathsf{w})$, which descends to an action on $\mathcal{M}_{\theta}(\mathsf{v},\mathsf{w})$ that preserves the symplectic form. We denote the coordinates on the framing torus as $u^{(i)}_{j}$ where $i \in I$ and $1\leq j \leq \mathsf{w}_i$. 

In addition, the torus $\mathbb{C}^{\times}_{t_1}\times \mathbb{C}^{\times}_{t_2}$ with coordinates $(t_1,t_2)$ acts on $T^*M$ by 
$$
(t_1,t_2) \cdot (X_i,Y_i,I_i,J_i) = (t_2 X_i, t_1 Y_i,I_i,t_1 t_2 J_i)
$$
The $\mathbb{C}^{\times}_{t_1}\times\mathbb{C}^{\times}_{t_2}$-weight of the symplectic form on $\mathcal{M}$ is $\hbar^{-1}$ where $\hbar=t_1t_2$. The torus $\text{ker}(\hbar^{-1}) \subset \mathbb{C}^{\times}_{t_1}\times\mathbb{C}^{\times}_{t_2}$ is 1-dimensional. Passing to a double cover, we let $a$ be the coordinate on $\text{ker}(\hbar^{-1})$ such that
$$
t_1 = \sqrt{\hbar} a, \quad t_2 = \sqrt{\hbar}/a
$$
We define
$$
\bT:=\bA_{\mathsf{w}}\times \mathbb{C}^{\times}_{t_1} \times \mathbb{C}^{\times}_{t_2}
$$

With this notation, the subtorus of $\bT$ preserving the symplectic form is $\bA_{\mathsf{w}}\times \mathbb{C}^{\times}_a$.

\subsection{Tautological bundles}
The vector spaces $V_i$ descend to topologically nontrivial tautological bundles on $\mathcal{M}$, defined by
\begin{equation}\label{tb}
\mathcal{V}_i:=\mu^{-1}(0)^{\theta-ss} \times_{G_{\mathsf{v}}} V_i
\end{equation}
The $\bT$ action on  $M$ induces a $\bT$-equivariant structure on the bundles $\mathcal{V}_i$. It was proven in \cite{kirv} that the tautological bundles generate the $\bT$-equivariant $K$-theory of $\mathcal{M}$. Using the vector spaces $W_i$, we also obtain a collection of topologically trivial bundles $\mathcal{W}_i$.

\subsection{Torus fixed points}\label{fixedpoints}
Let $p \in \mathcal{M}^{\bT}$. Then there is a representative for $p$ of the form $(X_i,Y_i,I_i,J_i)$ such that for all $t=(u_{\mathsf{w}},t_1,t_2) \in \bA_{\mathsf{w}}\times \mathbb{C}^{\times}_{t_1} \times \mathbb{C}^{\times}_{t_2}$, there exists some $g_{t} \in G_{\mathsf{v}}$ where
$$
t \cdot (X_i,Y_i,I_i,J_i) = g_{t} \cdot (X_i,Y_i,I_i,J_i)
$$
Suppressing indices on $g_{t}$, this means that
$$
(t_2 X_i, t_1 Y_i, I_i u_{\mathsf{w}}^{-1}, t_1t_2 u_{\mathsf{w}} J_i)= (g_{t} X_i g_{t}^{-1}, g_{t} Y_i g_{t}^{-1},g_{t} I_i, J_i g_{t}^{-1})
$$
Decomposing each $V_i$ into generalized eigenspaces of $g_{t}$ as $V_i=\bigoplus_{\zeta} V_i(\zeta)$, we see that
\begin{align*}
    I_i: \text{Span}_{\mathbb{C}}\{ e^{(i)}_{j}\}\to V_i((u^{(i)}_{j})^{-1})
\end{align*}
where $e^{(i)}_{j}$ is the coordinate vector of $W_i$ with $\bA_{\mathsf{w}}$-weight of $(u^{(i)}_{j})^{-1}$. In addition, the maps $X_i$ and $Y_i$ change the weights by $t_2$ and $t_1$, respectively:
\begin{align*}
    X_i:& V_i(\zeta) \to V_{i+1}(t_2 \zeta) \\
    Y_i:& V_{i+1}(\zeta) \to V_{i}(t_1 \zeta)
\end{align*}
By Proposition \ref{stability}, the images of $I_i$ generate all of $\bigoplus_{i \in I} V_i$ under the action of $X_i$ and $Y_i$. This implies that each $V_i$ decomposes into eigenspaces for $g_{t}$ with eigenvectors of the form $t_1^{a} t_2^{b} (u^{(i)}_{j})^{-1}$ with $a,b,\geq 0$. In addition
$$
t_1 t_2 u_{\mathsf{w}}J_i = J_i g_t^{-1} \implies J_i v = 0 \text{ unless } v \in V_i((t_1 t_2 u^{(i)}_{j})^{-1})
$$
Such eigenvectors never appear, which means that $J_i=0$. Since $(X_i,Y_i,I_i,J_i)\in \mu^{-1}(0)$, the explicit form of the moment map implies that 
$$
X_{i} Y_{i}-Y_{i+1}X_{i+1}=0
$$

This data provides us with a partition $\lambda^{(i)}_{j}=(\lambda^{(i)}_{j,1}\geq \lambda^{(i)}_{j,2},\ldots)$ for each framing weight $u^{(i)}_{j}$, where $\lambda^{(i)}_{j,k}$ is the number of eigenvectors with eigenvalue $t_1^{a} t_2^{k-1} u^{(i)}_{j}$ for some $a \in \mathbb{Z}_{\geq 0}$. It follows immediately from semistability and the moment map condition that $\lambda^{(i)}_{j}$ is indeed a partition. Under this interpretation, the boxes of the Young diagram are in a natural bijection with all the vectors with $\bA_{\mathsf{w}}$-weight of $(u^{(i)}_{j})^{-1}$. Furthermore, the collection of Young diagrams $\lambda^{(i)}_{j}$ respects the dimension $\mathsf{v}$, by which we mean that the number of boxes in all the Young diagrams representing vectors in $V_i$ is equal to $\mathsf{v}_i$. In other words, we obtain a $(\mathsf{v},\mathsf{w})$-tuple of partitions in the sense of Definition \ref{vwpart}.

Conversely, given a $(\mathsf{v},\mathsf{w})$-tuple of partitions $\llambda$, we can construct an element  of $T^*M$ by letting $V_i$ be the vector space with basis given by the boxes of $\llambda$ with content modulo $r$ equal to $i$. We define $X_i$ as the map that sends each box to the box directly to the right of it, or 0 if no such box exists. Similarly, $Y_i$ sends each box to the box above it if such a box exists, and is 0 otherwise. We define $I_i$ as the map that sends each coordinate vector of $\mathbb{C}^{\mathsf{w}_i}$ to the different root boxes of the partitions $\lambda^{(i)}_{j}$ for $j=1,\ldots,\mathsf{w}_i$. Finally, we define $J_i=0$. Then it is easy to check that the tuple $(X_i,Y_i,I_i,J_i)$ defines a point of $\mu^{-1}(0)^{\theta-ss}$.

To summarize, we have
\begin{Proposition}
$\bT$-fixed points on $\mathcal{M}(\mathsf{v},\mathsf{w})$ are naturally indexed by $(\mathsf{v},\mathsf{w})$-tuples of partitions.
\end{Proposition}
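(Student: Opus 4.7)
The plan is to construct an explicit bijection between $\bT$-fixed points and $(\mathsf{v},\mathsf{w})$-tuples of partitions, following the paragraphs preceding the statement and making each step rigorous.

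In the forward direction, given $p \in \mathcal{M}^{\bT}$, I would first choose a representative $(X_i,Y_i,I_i,J_i)\in\mu^{-1}(0)^{\theta\text{-ss}}$. Being $\bT$-fixed in the quotient means that for every $t \in \bT$ there exists $g_t \in G_{\mathsf{v}}$ intertwining the $t$-action with a $G_{\mathsf{v}}$-action. The first technical step is to promote this set-theoretic family $t \mapsto g_t$ to an actual group homomorphism $\bT \to G_{\mathsf{v}}$; this uses that $\theta$-semistable points have finite (in fact trivial, up to the scalar center absorbed by the kernel of $\theta$) stabilizer together with a standard rigidification argument. Once one has a homomorphism, each $V_i$ decomposes into simultaneous weight spaces $V_i(\zeta)$ for $\bT$.

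The second step is to read off the weights. Equivariance of $I_i$ with $\bA_{\mathsf{w}}$-weights $(u^{(i)}_j)^{-1}$ forces $I_i(e^{(i)}_j) \in V_i((u^{(i)}_j)^{-1})$, while $X_i$ shifts weights by $t_2$ and $Y_i$ shifts weights by $t_1$. Because $\theta$-semistability (Proposition \ref{stability}) says that the images of the $I_i$ generate $\bigoplus_i V_i$ under $X_i, Y_i$, every weight of $g_t$ on $V_i$ has the form $t_1^a t_2^b (u^{(i)}_j)^{-1}$ with $a,b \geq 0$. Equivariance of $J_i$ forces $J_i$ to vanish on every weight space except those of weight $(t_1 t_2 u^{(i)}_j)^{-1}$, and these never occur; hence $J_i = 0$. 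With $J_i = 0$, the moment map equation collapses to $X_i Y_i = Y_{i+1} X_{i+1}$.

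The third step extracts the partitions. For each framing index $(i,j)$ and each $k \geq 1$, define $\lambda^{(i)}_{j,k}$ as the number of $a \geq 0$ such that $t_1^a t_2^{k-1}(u^{(i)}_j)^{-1}$ occurs as a weight in $V_i$ (after tracking the mod-$r$ shift in the vertex index). The commutation relation and semistability, applied to the span of the $X_i^a Y_i^b I_i(e^{(i)}_j)$, force this to be weakly decreasing in $k$, giving a genuine partition whose boxes biject with the weight vectors generated from $e^{(i)}_j$; summing over $(i,j)$ and grouping by content mod $r$ exactly recovers $\dim V_i = \mathsf{v}_i$, so we obtain a $(\mathsf{v},\mathsf{w})$-tuple in the sense of Definition \ref{vwpart}.

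For the reverse direction, given $\llambda$, I build $(X_i,Y_i,I_i,0)$ exactly as in the author's construction, take $V_i$ to be the span of boxes of content $\equiv i$, $X_i$ to move a box one step right, $Y_i$ to move it one step up, and $I_i$ to inject the standard basis of $W_i$ onto the $\mathsf{w}_i$ root boxes. It is straightforward that $X_i Y_i = Y_{i+1} X_{i+1}$ (both sides move a box diagonally up-right when possible and vanish otherwise) and that the $\theta$-semistability criterion of Proposition \ref{stability} holds since every box is reachable from some root box by applying $X$'s and $Y$'s. The constructed point is manifestly $\bT$-fixed because the diagonal $g_t$ scaling the box $(x,y)$ in $\lambda^{(i)}_j$ by $t_1^{y-1} t_2^{x-1}(u^{(i)}_j)^{-1}$ intertwines the actions. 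Finally, the two constructions are visibly inverse: starting from $\llambda$, extracting the weights of the canonical $g_t$ returns the same $\llambda$; and starting from a fixed point and rebuilding gives back an isomorphic representative, unique up to the $G_{\mathsf{v}}$-action that was quotiented out.

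The main obstacle is the first step, promoting the family $g_t$ to a homomorphism so that the weight decomposition makes global sense; the partition/semistability bookkeeping in the remaining steps is routine once that is in place.
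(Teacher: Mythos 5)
Your proposal is correct and follows essentially the same route as the paper's own argument: in the forward direction one reads off the weight decomposition of each $V_i$ under the compensating $g_t$, uses $\theta$-semistability to confine the weights to $t_1^a t_2^b (u^{(i)}_j)^{-1}$ with $a,b\geq 0$ and to kill $J_i$, and extracts the partitions, while the reverse direction builds the explicit representative with boxes as basis vectors. The only addition is your explicit rigidification of the family $t\mapsto g_t$ into a homomorphism, a standard point the paper leaves implicit.
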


\subsection{Weights at torus fixed points}\label{tweights}
Let $\llambda$ be a $(\mathsf{v},\mathsf{w})$-tuple of partitions, which we think of as indexing a $\bT$-fixed point on $\mathcal{M}(\mathsf{v},\mathsf{w})$. The argument in the previous subsection also provides the $\bT$-weights of the tautological bundle $\mathcal{V}_i$ at the fixed point $\llambda$. If we denote the coordinates of boxes in $\llambda$ by $(x,y)$, then 
$$
\mathcal{V}_i = \sum_{i \in I} \sum_{j=1}^{\mathsf{w}_i} \sum_{\substack{\square=(x,y) \in \lambda^{(i)}_{j} \\ \overline{c_{\square}}=i}} t_1^{1-y} t_2^{1-x} u^{(i)}_{j} \in K_{\bT}(pt)
$$

\subsection{Polarization}
A polarization of $\mathcal{M}$ is a class $T^{1/2}\in  K_{\bT}(\mathcal{M})$ such that the class of the tangent bundle of $\mathcal{M}$ can be decomposed as
$$
T \mathcal{M} = T^{1/2} + t_1 t_2 (T^{1/2})^{\vee} \in K_{\bT}(\mathcal{M})
$$
The tangent bundle can be expressed in terms of the tautological bundles as
\begin{align}\label{tan}
T\mathcal{M}=& \sum_{i=0}^{r-1} \left(t_2 Hom(V_i,V_{i+1}) + t_1 Hom(V_{i+1},V_i) \right) \\ &+ \sum_{i=0}^{r-1} \left(Hom(\mathcal{W}_i,\mathcal{V}_i) + t_1 t_2 Hom(\mathcal{V}_i,\mathcal{W}_i)\right) \\ &- \sum_{i=0}^{r-1} \left( Hom(V_i,V_i)+t_1t_2 Hom(V_i,V_i)\right)
\end{align}

There are many natural polarizations of $\mathcal{M}$ which can be obtained by choosing one of the two terms from each summation above. In this paper, we allow for an arbitrary choice of polarization.

\subsection{Chern roots and torus weights labeled by boxes}
Fix $\llambda \in \mathcal{M}^{\bT}$. For each $i$, choose some bijection
$$
\{\text{Chern roots of } \mathcal{V}_i\} \longleftrightarrow \{a \in \llambda \, \mid \, \overline{c_a}=i\}
$$
Since our eventual formula for the elliptic stable envelope will involve a symmetrization over all the Chern roots of each tautological bundle, the choice of bijection does not matter. Hence we have a Chern root $x_a$ for each box $a \in \llambda$. Additionally, there is a natural bijection
$$
\{a \in \llambda \, \mid \, \overline{c_a}=i\} \longleftrightarrow \{\bT-\text{weights of } \mathcal{V}_i|_{\llambda}\}
$$
which sends the box $a=(x,y)\in \lambda^{(i)}_{j}$ to the weight $\varphi^{\llambda}_{a}:=t_1^{1-y} t_2^{1-x} u^{(i)}_{j}$.



\subsection{Chamber}
We allow for a fairly general choice of chamber $\mathfrak{C} \subset \Lie_{\mathbb{R}}(\bA)$, where $\bA$ is the subtorus of $\bT$ preserving the symplectic form. A chamber is represented by a real cocharacter 
$$
\sigma \in \text{cochar}(\bA)\otimes_{\mathbb{Z}} \mathbb{R}
$$
Recall that $\bA=\bA_{\mathsf{w}}\times \mathbb{C}^{\times}_{a}$, where $\bA_{\mathsf{w}}$ is the framing torus, and $a=\sqrt{t_1/t_2}$. With respect to this decomposition,
$$
\sigma =(\sigma_{\mathsf{w}},\sigma_a)
$$
We assume that our cocharacter is generic, in the sense that it lies off of the walls determined by the $\bT$-weights of the tangent spaces at the fixed points:
$$
\sigma \in \Lie_{\mathbb{R}}(\bA)\setminus\{\sigma' \, \mid \, \langle \sigma', w \rangle =0, w \in \text{char}_{\bT}(T_{\llambda}\mathcal{M}), \llambda \in \mathcal{M}^{\bT}\}
$$
where $\langle -,-\rangle$ is the natural pairing on characters and cocharacters.

Such a cocharacter decomposes the tangent space at each fixed point into attracting and repelling directions:
$$
T_{\llambda}\mathcal{M} = N_{\llambda}^{+} \oplus N_{\llambda}^{-}, \quad N_{\llambda}^{\pm}=\sum_{\substack{w \in \text{char}_{\bT}(T_{\llambda}\mathcal{M}) \\ \pm \langle \sigma, w \rangle >0}} w
$$
In other words, $N_{\llambda}^{\pm}$ consists of weights $w$ of the tangent space so that $\lim_{t^{\pm}\to 0} t^{\langle \sigma, w \rangle}$ exists.

We assume that $\sigma_{a}$ is infinitesimally small compared to the rest of the components, by which we mean that the presence of $a$ in a weight never affects its attracting/repelling properties unless it is the only coordinate that appears. In other words, we assume that the $\sigma$-attracting/repelling properties of tangent weights with nonzero $\bA_{\mathsf{w}}$-weight are totally determined by $\sigma_{\mathsf{w}}$.

\begin{Definition}\label{admisschamb}
A chamber $\mathfrak{C}$ satisfying the above property is called an admissible chamber.
\end{Definition}

We can uniformly rescale each component of our cocharacter $\sigma$ without changing the corresponding chamber. So, without loss of generality, we assume that $\sigma_a=1$.

\subsection{Ordering on boxes}
Given a box $a \in \llambda$, we define
\begin{equation}\label{rho}
\rho_{a}:= \langle \sigma, \varphi^{\llambda}_a \rangle + \epsilon \deg_{\hbar} (\varphi^{\llambda}_a)
\end{equation}
where $0<\epsilon\ll 1$. Due to the genericity assumption on $\sigma$, the function $\rho$ defines a total order on the set of all boxes in $\llambda$.

The function $\rho$ is simply a convenient way to extend the partial order of attraction on weights induced by $\sigma$ to a total order on boxes. In particular, for $\frac{\varphi^{\llambda}_a}{\varphi^{\llambda}_{b}} \in \text{char}_{\bT}(T_{\llambda}\mathcal{M})$
$$
\frac{\varphi^{\llambda}_a}{\varphi^{\llambda}_{b}} \text{ is } \sigma\text{-repelling} \implies \rho_a < \rho_b
$$

\subsection{Orientation between vertices}\label{orient}

Recall that $t_1= \sqrt{\hbar} a$ and $t_2=\sqrt{\hbar}/a$. The cocharacter $\sigma$ forces one of these $\bT$-weights to be attracting and one to be repelling. We denote by $t_{+}$ the attracting one and $t_{-}$ the repelling one. 

This induces on orientation between adjacent vertices in the quiver, given by the following ``tail" and ``head" functions:
\begin{align*}
    t(i,i+1)&=\begin{cases}
  i &  t_{-}=t_2 \\
   i+1 & t_{-}=t_1
    \end{cases} \\
h(i,i+1)&=  \begin{cases}i+1& t_{-}=t_2 \\
  i & t_{-}=t_1
    \end{cases}
\end{align*}
where the indices are taken modulo $r$.

Since, roughly speaking, $t_2$ (resp. $t_1$) acts on arrows pointed to the right (resp. left) in the framed doubled quiver, this can be though of as distinguishing the arrows acted on by the repelling direction.

\section{Formula for elliptic stable envelopes}\label{formula}

We will not define elliptic stable envelopes here. Rather, we refer to \cite{AOElliptic}, \cite{indstab1}, \cite{indstab2}, and \cite{SmirnovElliptic}. Elliptic stable envelopes of a quiver variety $\mathcal{M}$ depend on a choice of chamber $\mathfrak{C} \subset \text{cochar}(A)\otimes_{\mathbb{Z}}\mathbb{R}$ and polarization $T^{1/2}$. 

Since all quiver varieties considered in this paper have finitely many torus fixed points, we follow the convention described in section 2.13 of \cite{SmirnovElliptic} and think of the elliptic stable envelope of a fixed point as providing a section $\stab_{\mathfrak{C},T^{1/2}}(\llambda)$ of a line bundle over $\mathsf{E}_{\bT}(\mathcal{M})$, the extended equivariant elliptic cohomology scheme of $\mathcal{M}$. This scheme has the form
$$
\mathsf{E}_{\bT}(\mathcal{M})=\left(\bigsqcup_{\llambda \in \mathcal{M}^{\bT}} \widehat{\mathsf{O}}_{\llambda}\right) / \Delta
$$
where each $\widehat{\mathsf{O}}_{\llambda}$ is isomorphic to the base of the extended elliptic cohomology theory and $\Delta$ denotes the gluing data. The base of the theory is the product of $\text{rank}(\bT)+r$ copies of the elliptic curve $E=\mathbb{C}^{\times}/q^{\mathbb{Z}}$, where $r$ is the number of vertices of the quiver. We can use the coordinate on $\mathbb{C}^{\times}$ to provide a ``coordinate" on $E$. The first $\text{rank}(\bT)$ coordinates naturally correspond to the equivariant parameters, whereas we denote the latter by $z_0,z_1,\ldots,z_{r-1}$ and refer to them as K\"ahler parameters.

Restricting the section $\stab_{\mathfrak{C},T^{1/2}}(\llambda)$ to one of the components $\widehat{\mathsf{O}}_{\llambda'}$ provides a section of a line bundle over a product of elliptic curves, which can thus be described in terms of theta functions of the equivariant and K\"ahler parameters.

Our formula below will be given in the off-shell form, which means that $\stab_{\mathfrak{C},T^{1/2}}(\llambda)$ will be described in terms of the Chern roots of the tautological bundles of $\mathcal{M}$. Restricting the Chern roots to their values at another fixed point $\llambda'$ provides $\stab_{\mathfrak{C},T^{1/2}}(\llambda)|_{\llambda'}$.

For the remainder of this section, we fix $r \in \mathbb{N}$, $I=\{0,1,\ldots,r-1\}$, and $\mathsf{v},\mathsf{w} \in \mathbb{N}^{I}$. For the corresponding quiver variety $\mathcal{M}$, we choose a polarization $T^{1/2}$, and an admissible chamber $\mathfrak{C}$ as in Definition \ref{admisschamb}. We fix $\llambda \in \mathcal{M}^{\bT}$. As in (\ref{rho}), we have a real-valued function $\rho$ on boxes in $\llambda$. The chamber also provides $t_{+}$, $t_{-}$, and the functions $h$ and $t$ from section \ref{orient}.

We remind the reader that all trees $\tr$ inside partitions are assumed to be admissible in the sense of Definition \ref{adtr} and canonically oriented away from the root box.

\subsection{Statement of the formula}\label{formula2}
The $(\mathsf{v},\mathsf{w})$-tuple of partitions $\llambda$ consists of partitions $\lambda^{(i)}_{j}$ for $i \in I$ and $1 \leq j \leq \mathsf{w}_i$. Each of these partitions has a root box, which we denote by $r_{i,j}$.

We define the following three functions:
\begin{align*}
S_1(\xx;q,t_1,t_2)&=\prod_{i=0}^{r-1} \prod_{\substack{a,b, \in \llambda \\ \overline{c_a}=t(\overline{i},\overline{i+1}) \\ \overline{c_b}=h(\overline{i},\overline{i+1}) \\ \rho_a+1 < \rho_b}} \vartheta( t_{+} x_a/x_b) \prod_{\substack{a,b, \in \llambda \\ \overline{c_a}=t(\overline{i},\overline{i+1}) \\ \overline{c_b}=h(\overline{i},\overline{i+1}) \\ \rho_b < \rho_a +1}} \vartheta( t_{-}x_b/x_a) \\
S_2(\xx,\uu;q,t_1,t_2)&= \prod_{i=0}^{r-1} \prod_{j=1}^{\mathsf{w}_i}\left( \prod_{\substack{a \in \llambda \\ \overline{c_a}= i  \\ \rho_a \leq \rho_{r_{i,j}} }} \vartheta(x_a/u^{(i)}_{j}) \prod_{\substack{a \in \llambda \\ \overline{c_a}=i \\ \rho_{r_{i,j}} < \rho_a }} \vartheta(t_1 t_2 u^{(i)}_{j}/x_a)\right)\\
S_3(\xx;q,t_1,t_2)&= \prod_{\substack{a,b\in\llambda \\ \overline{c_a}=\overline{c_b} \\ \rho_a<\rho_b}} \vartheta(x_a/x_b) \vartheta(t_1 t_2 x_a/x_b)
\end{align*}
Let
$$
S_{\llambda}(\xx,\uu;q,t_1,t_2)=\frac{S_1(\xx;q,t_1,t_2) S_2(\xx,\uu;q,t_1,t_2)}{S_3(\xx;q, t_1,t_2)}
$$
We will sometimes omit the arguments and just write $S_{\llambda}$.

\begin{Definition}
Let $\llambda \in \mathcal{M}^{\bT}$. Let
$$
T^{1/2}=T^{1/2}_{<0} + T^{1/2}_{=0} + T^{1/2}_{>0}
$$
be the decomposition of $T^{1/2}$ into virtual sub-bundles that restrict to repelling, stationary, and attracting directions at $\llambda$. We define the index of $\llambda$ to be
$$
\text{ind}_{\llambda}= T^{1/2}_{>0}
$$
\end{Definition}

The index can be expressed in terms of the Chern roots of the tautological bundles, and we define integers $d^{\llambda}_a$ by
$$
\det \text{ind}_{\llambda}= \prod_{a \in \llambda} x_a^{d^{\llambda}_a}
$$

\begin{Definition}
For a tree $\tr^{(i)}_j$ in a partition $\lambda^{(i)}_j$, we define
\begin{multline*}
\trwt^{\lambda^{(i)}_j}_{\tr^{(i)}_{j}}(\xx,\uu,\zz;q,t_1,t_2) = (-1)^{\kappa(\tr^{(i)}_{j})}\phi\left(\frac{x_{r_{i,j}}}{\varphi^{\llambda}_{r_{i,j}}}, \prod_{a \in [r_{i,j}, \tr^{(i)}_j]} z_{c_a} (t_1t_2)^{d^{\llambda}_a}\right) \\ 
\prod_{e \in \tr^{(i)}_j} \phi\left(\frac{x_{h(e)} \varphi^{\llambda}_{t(e)}}{x_{t(e)}\varphi^{\llambda}_{h(e)}}, \prod_{a \in [h(e),\tr^{(i)}_{j}]} z_{c_a} (t_1t_2)^{d^{\llambda}_a}\right)
\end{multline*}
where $\kappa$ is the function in Definition \ref{kappa}. Here, the product $\prod\limits_{e \in \tr^{(i)}_{j}}$ denotes the product over edges in the tree $\tr^{(i)}_{j}$.
\end{Definition}

Let
$$
\lt:=\prod_{i=0}^{r-1} \prod_{j=1}^{\mathsf{v}_i} \Gamma_{\lambda^{(i)}_{j}}
$$
where $\Gamma_{\lambda^{(i)}_{j}}$ is from Definition \ref{adtr} so that an element of $\lt$ is a tuple of trees, one inside of each partition in $\llambda$.

\begin{Definition}
For $\llambda \in X^{\bT}$ and $\trs \in \lt$, let 
$$
\trwts^{\llambda}_{\trs}(\xx,\uu,\zz;q,t_1,t_2)= \prod_{i=0}^{r-1} \prod_{j=1}^{\mathsf{w}_i}  \trwt^{\lambda^{(i)}_{j}}_{\tr^{(i)}_{j}}(\xx,\uu,\zz;q,t_1,t_2)
$$
\end{Definition}
We will sometimes omit the arguments and just write $\trwts^{\llambda}_{\trs}$.

Our main theorem is:
\begin{Theorem}\label{mainthm}
The elliptic stable envelope of $\llambda \in \mathcal{M}^{\bT}$ is given by
$$
\stab_{T^{1/2},\mathfrak{C}}(\llambda)=\sym_{0}\,  \sym_{1} \, \ldots \, \sym_{r-1}\left( S_{\llambda} \sum_{\trs \in \lt} \trwts^{\llambda}_{\trs} \right)
$$
where $\sym_i$ is the symmetrization over the Chern roots of $\mathcal{V}_i$.
\end{Theorem}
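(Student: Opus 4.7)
My plan is to follow the abelianization strategy of \cite{SmirnovElliptic}, exploiting the uniqueness characterization of elliptic stable envelopes from \cite{AOElliptic} (sections 2, 3), which says that $\stab_{T^{1/2},\mathfrak{C}}(\llambda)$ is uniquely determined by (i) its class in the appropriate line bundle on $\mathsf{E}_{\bT}(\mathcal{M})$ (controlled by $\text{ind}_{\llambda}$ and the K\"ahler line bundle), (ii) the diagonal normalization $\stab_{T^{1/2},\mathfrak{C}}(\llambda)|_{\llambda} = \prod \vartheta(w)$ over repelling tangent weights $w \in N_{\llambda}^{-}$, and (iii) the support condition at off-diagonal fixed points $\llambda' \neq \llambda$, i.e.\ the restriction $\stab_{T^{1/2},\mathfrak{C}}(\llambda)|_{\llambda'}$ is holomorphic on a specified divisor in the K\"ahler variables. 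It therefore suffices to verify that the right-hand side of Theorem \ref{mainthm} satisfies all three properties.

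The bulk of the work is to realize the sum $S_{\llambda}\sum_{\trs}\trwts^{\llambda}_{\trs}$ as the pullback under an abelianization map. Concretely, $\mathcal{M}$ is the GIT quotient of $\mu^{-1}(0)$ by $G_{\mathsf{v}}=\prod GL(V_i)$, and its abelianization $\mathcal{M}^{\text{ab}}$ is the quotient by the maximal torus $T_{\mathsf{v}}\subset G_{\mathsf{v}}$. The $\bT$-fixed points of $\mathcal{M}^{\text{ab}}$ lying above a given $\llambda\in\mathcal{M}^{\bT}$ are in bijection with triangulations of the Young diagrams compatible with the arrow action; these are precisely the admissible trees $\trs\in\lt$ after choosing which of the two edges in each $\bL$-shape is broken (the data of Definition \ref{adtr}). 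The function $S_{\llambda}$ collects the universal (tree-independent) factors coming from tangent directions at the abelianized fixed point, while $\trwts^{\llambda}_{\trs}$ encodes the tree-dependent contributions, with the sign $(-1)^{\kappa(\tr)}$ and the $\phi$-factors built so that each abelian summand satisfies the abelian stable envelope axioms on $\mathcal{M}^{\text{ab}}$. The symmetrization $\sym_0\cdots\sym_{r-1}$ then implements the transfer from abelian to non-abelian stable envelopes, cf.\ the descent from hypertoric formulas in \cite{AOElliptic}.

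Concretely, the verification proceeds in the following order. First, I identify the $\bT$-weights $\varphi^{\llambda}_a$ with boxes via section \ref{tweights} and rewrite the tangent characters using the total order $\rho$ on boxes; the three factors $S_1,S_2,S_3$ match the attracting/repelling decompositions coming from $\mathfrak{C}$ and the orientation $h,t$ of section \ref{orient}. Second, I check the quasiperiodicity of $S_{\llambda}\sum_{\trs}\trwts^{\llambda}_{\trs}$ in the K\"ahler variables $\zz$ and Chern roots $\xx$; the exponents $d^{\llambda}_a$ from $\det\text{ind}_{\llambda}$ are designed precisely to give the required shift so that the expression is a section of the correct line bundle. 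Third, for the diagonal restriction I specialize $x_a \mapsto \varphi^{\llambda}_{a}$: the first argument of each $\phi$ becomes $1$ or $\varphi^{\llambda}_{t(e)}/\varphi^{\llambda}_{h(e)}$, and most tree weights collapse, leaving a single $\trs$ contributing and producing the expected product of theta functions over $N_{\llambda}^{-}$. Fourth, and most importantly, I verify the support condition at $\llambda'\neq\llambda$ by restricting the Chern roots to the weights at $\llambda'$: here many abelian fixed points above $\llambda'$ contribute, and the symmetrization together with the tree sum produces cancellations. The passage from abelian to non-abelian support involves identifying a residue computation (as in \cite{SmirnovElliptic}, section 4), and the signs $(-1)^{\kappa(\tr)}$ are exactly what is needed to make the residues combine into a holomorphic expression.

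The main obstacle is this last step. Showing the off-diagonal support after symmetrization is where the combinatorics of admissible trees and the function $\kappa$ must conspire: one has to match an antisymmetrization over Weyl group elements against a sum over admissible orientations of $\bL$-shapes, so that poles in the K\"ahler variables coming from individual trees cancel between adjacent admissible trees differing by the swap of one broken edge. I expect this to reduce, term by term, to a three-term theta identity of the type appearing in \cite{SmirnovElliptic}, applied at each $\bL$-shape in each partition $\lambda^{(i)}_{j}$. The technical step is to set up the induction on the number of boxes so that this local identity suffices, with the base case being the abelianized one-box computation.
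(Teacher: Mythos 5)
Your proposal correctly identifies abelianization and the tree combinatorics as the engine of the formula, but the route you choose --- direct verification of the three axioms (line bundle class, diagonal normalization, off-diagonal support) --- is not the paper's route, and the step you yourself flag as the main obstacle is left as an expectation rather than an argument. The paper never verifies the support condition by hand. Instead it takes the definition of the stable envelope from \cite{AOElliptic} as the composition $\pi_{*} \circ \mathsf{j}_{+}^{*} \circ (\mathsf{j}_{-*})^{-1} \circ \stab'_{\mathfrak{C},T^{1/2}_{\mathcal{AM}}} \circ \mathsf{j}_{-*}' \circ ({\mathsf{j}'_{+}}^{*})^{-1} \circ {\pi'_{*}}^{-1}$ through the Martin diagram, so the axioms are automatic, and the entire content of the proof is the explicit computation of this composition. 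Two ingredients you do not identify make this work: first, the identity $\sum_{\tr \in \Gamma_{\lambda}} \stab_{\mathfrak{C}''_{\lambda}}(\tr_{\lambda}) = 1$ (Smirnov's Theorem 5, used in Proposition \ref{formalinverse}), which expresses the unit on the fixed component $\mathcal{AM}_{\llambda}$ as a sum of hypertoric stable envelopes of tree fixed points for the \emph{auxiliary} tori $\mathbb{C}^{\times}_{\tr}$; second, the triangle lemma, which converts $\stab'_{\mathfrak{C}} \circ \sum_{\trs}\stab_{\mathfrak{C}''}(\trs)$ into $\sum_{\trs}\stab_{\mathfrak{C}'}(\trs)$ for the enlarged chamber $\mathfrak{C}'$ of $\bA\times\bT_{\trs}$. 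After that, the known hypertoric formulas (Proposition \ref{trstab}) give $S_1 S_2 \prod_{\lambda}\atrwt_{\tr_{\lambda}}$, and the pushforward $\pi_{*}\circ\mathsf{j}_{+}^{*}\circ(\mathsf{j}_{-*})^{-1}$ contributes the symmetrization and the denominator $S_3$ (Proposition \ref{maps}).

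Two concrete problems with your plan as written. (1) You assert that the admissible trees are "the $\bT$-fixed points of $\mathcal{M}^{\text{ab}}$ lying above $\llambda$"; they are not. The $\bA$-fixed locus of $\mathcal{AM}$ over $\llambda$ is the positive-dimensional hypertoric variety $\mathcal{AM}_{\llambda}=\prod_{\lambda}\mathcal{A}_{\lambda}$, and the trees index fixed points of extra one-parameter tori $\mathbb{C}^{\times}_{\tr}$ acting on each factor. This distinction is not cosmetic: the decomposition of $1$ into tree contributions, and hence the appearance of the sum over $\lt$ at all, only makes sense relative to these auxiliary actions and their chambers $\mathfrak{C}''_{\tr_{\lambda}}$. (2) Your proposed verification of off-diagonal support --- matching antisymmetrization against sums over broken $\bL$-shapes via three-term theta identities, organized by induction on boxes --- is precisely the combinatorial mountain that the abelianization formalism is built to avoid, and you give no mechanism for why the poles in the K\"ahler variables cancel between adjacent trees. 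Without either carrying out that combinatorics or replacing it by the triangle-lemma argument, the proof is incomplete at its central step.
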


\subsection{Linear quivers}\label{linear}
Let $I=\{0,1,\ldots,r-1\}$ and let $\mathsf{v},\mathsf{w}\in \mathbb{Z}^{I}_{\geq 0}$. The \textit{linear} type $A_r$ quiver varieties constructed from this data is canonically isomorphic to the affine type $\hat{A}_{r+1}$ quiver variety constructed by taking the dimension and framing dimension at the last vertex to be 0, while the rest of the dimensions are the same as $\mathsf{v}$ and $\mathsf{w}$.

As a result, Theorem \ref{mainthm} also applies for linear type $A$ quiver varieties. However, the torus acting on the $\hat{A}_{r+1}$ variety has one more dimension than that acting on the $A_{r}$ variety. This extra action can be compensated for by the gauge group, which resolves this discrepancy. 

This means that for Theorem \ref{mainthm} to match the existing formulas in the literature for linear type $A$ quiver varieties, we must consider the subtorus $\bT'\subset \bT$ defined by setting $t_2=1$. Then the linear maps corresponding to right pointing arrows in the quiver are unchanged by the torus, while linear maps corresponding to left pointing arrows are scaled by $t_1$. In this case, the $\bT'$-weight of the symplectic form is $t_1^{-1}$, which is typically denoted by $\hbar^{-1}$. Hence, to obtain formulas which agree with the existing formulas in the literature for linear quiver varieties, we must simply substitute $t_2=1, t_1=\hbar$ in Theorem \ref{mainthm}.

\subsection{$K$-theory limit}\label{Ktheory}

It is simple to obtain formulas for stable envelopes in $K$-theory from Theorem \ref{mainthm}. We explain this procedure here. 

In addition to their dependence on a chamber and polarization, $K$-theoretic stable envelopes depend on a choice of slope $s$, which is a generic element of 
$$
\Pic(\mathcal{M})\otimes_{\mathbb{Z}}\mathbb{R}
$$
Here, generic means that $s$ lies in the complement of a certain collection of affine hyperplanes, see \cite{OS}. 

Let $\stab_{T^{1/2},\mathfrak{C}}^{s}(\llambda)$ be the $K$-theoretic stable envelope of $\llambda$ with respect to the slope $s$. Choose a total order on the $\bT$-fixed points of $\mathcal{M}$: $\llambda_1, \llambda_2, \ldots \llambda_n$. With respect to this ordered basis, we denote the restriction matrices of the elliptic and $K$-theoretic stable envelopes by
\begin{align*}
    \mathsf{E}_{i,j}=\stab_{T^{1/2},\mathfrak{C}}(\llambda_i)\big|_{\llambda_j} \\
     \mathsf{K}^{s}_{i,j}=\stab^{s}_{T^{1/2},\mathfrak{C}}(\llambda_i)\big|_{\llambda_j}
\end{align*}
We also introduce the matrix
$$
\mathsf{D}=\text{diag}\left(\left(\det T^{1/2}|_{\llambda_1}\right)^{-1/2}, \ldots, \left(\det T^{1/2}|_{\llambda_n}\right)^{-1/2}\right)
$$

It is known that the tautological line bundles $\mathcal{L}_i:=\det \mathcal{V}_i$ generate $\Pic(\mathcal{M})$, see \cite{kirv}. So we can express $s$ (possibly non-uniquely) in terms of this generating set as $(s_0,\ldots,s_{r-1})$ where $s_i \in \mathbb{R}$. We denote $z q^{s}$ for $(z_0 q^{s_0},\ldots, z_{r-1} q^{s_{r-1}})$. Then we have:

\begin{Theorem}[\cite{AOElliptic} Proposition 4.3]
The $K$-theoretic stable envelopes can be recovered from the elliptic stable envelopes as follows:
$$
\mathsf{K}^{s}=\lim_{q\to 0} \left( \mathsf{D} \cdot \mathsf{E}|_{z=zq^{-s}} \right)
$$
\end{Theorem}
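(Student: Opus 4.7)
The plan is to verify that the right-hand side, call it $\mathsf{K}'_{i,j} := \lim_{q\to 0}(\mathsf{D}\cdot \mathsf{E}|_{z=zq^{-s}})_{i,j}$, satisfies the three defining axioms that uniquely characterize the $K$-theoretic stable envelope at slope $s$: the support/triangularity condition with respect to the $\mathfrak{C}$-order on $\mathcal{M}^{\bT}$, the diagonal normalization, and the Newton polytope (slope) condition of Okounkov in the equivariant variables. Uniqueness of $\mathsf{K}^{s}$ then forces $\mathsf{K}' = \mathsf{K}^{s}$.

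First I would carry out the bare $q\to 0$ limit using the factorization $\vartheta(x)=(x^{1/2}-x^{-1/2})\varphi(qx)\varphi(q/x)$ from the preamble. Since $\varphi(qx),\varphi(q/x)\to 1$ as $q\to 0$ with $x$ held away from $0$ and $\infty$, each theta in $S_1,S_2,S_3$ and in the $z$-independent slots of $\trwt^{\lambda^{(i)}_{j}}_{\tr^{(i)}_{j}}$ degenerates to $-x^{-1/2}(1-x)$, yielding the expected $K$-theoretic binomial building blocks. The collected $x^{-1/2}$ prefactors assemble, upon restriction to the diagonal $\llambda'=\llambda$, into the square root of a signed determinant equal to $(\det T^{1/2}|_\llambda)^{1/2}$; this is precisely what $\mathsf{D}$ cancels, reproducing the standard $K$-theoretic diagonal $(-1)^{\mathrm{rk}\,T^{1/2}_{>0}}\prod_{w\in N_\llambda^{-}}(1-w^{-1})$ and verifying the diagonal normalization.

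Next I would handle the interaction of the substitution $z\mapsto zq^{-s}$ with the Kähler-dependent theta arguments in $\trwts^{\llambda}_{\trs}$. For each such argument, the substitution rescales it by an explicit power of $q$ determined by the tree-path and by $s$, and the quasi-periodicity $\vartheta(qx)=-q^{-1/2}x^{-1}\vartheta(x)$ converts integer parts of these exponents into monomial prefactors in $\xx,\uu,t_1,t_2$. Combined with the bare degeneration above, this produces, in the $q\to 0$ limit, a shift of the Newton polytope of each restriction matrix entry by exactly the lattice vector dual to $s$ against the tautological classes, which is the slope axiom on the $K$-theoretic side.

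The main obstacle is the support/triangularity axiom. One must show that when $\llambda'$ is not in the $\mathfrak{C}$-attracting closure of $\llambda$, the total $q$-power accumulated by $\mathsf{D}\cdot\mathsf{E}(\llambda)|_{\llambda'}|_{z=zq^{-s}}$ is strictly positive, so the limit vanishes, while on attracting pairs $\llambda' \le \llambda$ the power is exactly zero. After substituting $x_a\mapsto \varphi^{\llambda'}_{a}$ in the off-shell formula and bookkeeping every $q^{-1/2}$ coming from quasi-periodicity against every $q^{0}$ coming from the $\varphi(qx),\varphi(q/x)$ tails, this becomes a combinatorial inequality relating the $\rho$-orderings at $\llambda$ and $\llambda'$, the tree data, and the slope pairings $\langle s,-\rangle$. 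The key inputs are genericity of $s$ (ruling out resonant cancellations in the $q^{0}$ coefficient) and admissibility of $\mathfrak{C}$ (so that $\sigma_a$ never flips the $\rho$-order across terms where the inequality is tight); this is the delicate step around which the entire proof is built, and where the precise choice of $\mathsf{D}$ and the exponent $-s$ in $zq^{-s}$ has to be reverse-engineered to make everything consistent.
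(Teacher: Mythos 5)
The paper does not prove this statement: it is imported verbatim from \cite{AOElliptic}, Proposition 4.3, so there is no internal proof to compare against. Your overall skeleton --- show that the $q\to 0$ limit satisfies the defining axioms of the slope-$s$ $K$-theoretic stable envelope and invoke uniqueness --- is indeed the strategy of the cited reference, and your treatment of the diagonal normalization (the $x^{-1/2}$ prefactors from $\vartheta(x)\to x^{1/2}-x^{-1/2}$ assembling into $(\det T^{1/2}|_{\llambda})^{1/2}$, cancelled by $\mathsf{D}$) is correct in outline.

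However, the step you flag as ``the main obstacle'' is misdirected, and the actual hard step is left unexecuted. Triangularity does not need to be extracted from a $q$-power count: the elliptic stable envelope already satisfies the support axiom at finite $q$, so $\mathsf{E}_{i,j}=0$ identically for non-attracting pairs and this passes to the limit for free. Likewise your claim that the accumulated $q$-power ``is exactly zero'' on attracting pairs is false for off-diagonal entries, which are allowed to (and often do) vanish in the limit. The genuinely delicate points are (i) that the limit exists at all, i.e.\ that no negative powers of $q$ survive after the substitution $z\mapsto zq^{-s}$, and (ii) that the resulting off-diagonal entries satisfy the Newton-polytope degree bound characterizing the slope-$s$ envelope. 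In \cite{AOElliptic} both follow from the quasi-periodicity of $\mathsf{E}_{i,j}$ in the K\"ahler variables (controlled by $\det \mathrm{ind}_{\llambda}$) together with genericity of $s$ --- an argument that uses only the abstract defining properties of the elliptic stable envelope and hence works for arbitrary Nakajima varieties. Your plan to instead run the bookkeeping through the explicit tree formula of Theorem \ref{mainthm} would, even if completed, be both far harder and strictly less general; as written, the ``combinatorial inequality'' you defer to is precisely the content of the theorem, so the proposal does not yet constitute a proof.
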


More generally, \cite{KS2} describes the limit on the right hand side of the previous theorem for non-generic slopes $s$. In this case, some of the K\"ahler parameters survive. Nevertheless, the equivariant and K\"ahler parameters separate in a nontrivial way. The resulting limit factors to a product of matrices given in terms of the $K$-theoretic stable envelope of $\mathcal{M}$ for a generic slope $s'$ near $s$ and the $K$-theoretic stable envelope of a variety $X^{!}_s$ for small ample slope. The variety $X^{!}_s$ is a subvariety of the 3d mirror dual variety of $\mathcal{M}$ determined by the non-generic slope $s$.

\section{Proof of Theorem \ref{mainthm}}\label{proof}
Fix $\mathsf{v},\mathsf{w}$, and $\mathcal{M}$ as before. The proof of Theorem \ref{mainthm} uses abelianization, which is the concept of relating quotients by reductive groups to quotients by a maximal torus, see \cite{ProudHyper}. Abelianization has already appeared in the context of stable envelopes in \cite{AOElliptic}, \cite{Shenfeld}, and \cite{SmirnovElliptic}.

\subsection{Abelianization}
In this section, we use the notation of section \ref{defs}. The character $\theta: G_{\mathsf{v}}\to \mathbb{C}^{\times}$ induces a character of the diagonal maximal torus $S_{\mathsf{v}}\subset G_{\mathsf{v}}$, which we also denote by $\theta$.

\begin{Definition}
The abelianization of $\mathcal{M}_{\theta}(\mathsf{v},\mathsf{w})$ is the algebraic symplectic reduction
$$
\mathcal{AM}_{\theta}(\mathsf{v},\mathsf{w}):= T^*M(\mathsf{v},\mathsf{w})/\!\!/\!\!/\!\!/\!\!/_{\theta} S_{\mathsf{v}}
$$
where $S_{\mathsf{v}}\subset G_{\mathsf{v}}$ is the diagonal maximal torus.
\end{Definition}
When the choice of $\mathsf{v}$ and $\mathsf{w}$ are clear, we abbreviate the abelianzation by $\mathcal{AM}$.

\subsection{Abelianization of fixed points}\label{abel}
Let $\llambda$ be a $(\mathsf{v},\mathsf{w})$-tuple of partitions indexing a $\bA$-fixed point of $\mathcal{M}$, and let $\lambda\in \llambda$ be a partition. Define
$$
V_{\lambda}^{i}=\mathbb{C}^{d^{i}_{\lambda}}
$$
where $d^{i}_{\lambda}$ is the number of boxes of $\lambda$ with content $i$, where we understand the content to be shifted relative to the framing to which $\lambda$ corresponds, as in section \ref{vwtuples}.

Consider
$$
M_{\lambda}=\bigoplus_{i=0}^{r-1} Hom(V_{\lambda}^i,V_{\lambda}^{i+1})\oplus Hom(\mathbb{C},V^{r_{\lambda}}_{\lambda})
$$
where $r_{\lambda}$ denotes the content of the root box of $\lambda$ and we assume that indices are taken mod $r$

The group $G_{\lambda}:=\prod_{i}GL(V_{\lambda}^{i})$ acts naturally on $M_{\lambda}$, and we define
$$
\mathcal{M}_{\lambda}:=T^* M_{\lambda} /\!\!/\!\!/\!\!/_{\theta_{\lambda}} G_{\lambda}
$$
where $\theta_{\lambda}:( g_i)_{i \in I} \mapsto \prod_{i \in I} \det g_i$. The varieties $\mathcal{M}_{\lambda}$, where $\lambda$ ranges over all partitions, account for all nonempty finite type $A$ quiver varieties with 1 framing dimension. Such varieties are geometrically just a single point, and the enumerative geometry of quasimaps to these varieties has been extensively studied in \cite{dinksmir2}, \cite{dinksmir3}, and \cite{dinkms1}.

\begin{Definition}
The abelianization of $\lambda$ is the hypertoric variety
$$
\mathcal{A}_{\lambda}:=T^*M_{\lambda}/\!\!/\!\!/\!\!/_{\theta_{\lambda}} S_{\lambda}
$$
where $S_{\lambda} \subset G_{\lambda}$ is the diagonal maximal torus.
\end{Definition}

Since $\llambda$ is a $(\mathsf{v},\mathsf{w})$-tuple of partitions, we have
\begin{equation}\label{boxdec}
\bigoplus_{\lambda \in \llambda} V_{\lambda}^{i} = \mathbb{C}^{\mathsf{v}_i}
\end{equation}
Identifying each $V_{\lambda}^{i}$ as a subspace of $\mathbb{C}^{\mathsf{v}_i}$, we have a natural action of the group $S_{\mathsf{v}}$ on 
$$
\mathsf{M}_{\llambda}:=\bigoplus_{\lambda \in \llambda} M_{\lambda}
$$


\begin{Definition}
The abelianization of $\llambda$ is the algebraic symplectic reduction
$$
\mathcal{AM}_{\llambda}:=T^*\mathsf{M}_{\llambda} /\!\!/\!\!/\!\!/_{\theta} S_{\mathsf{v}}
$$
\end{Definition}

Due to the natural inclusion $T^*\mathsf{M}_{\llambda} \subset T^*M(\mathsf{v},\mathsf{w})$ at the level of vector spaces, the hypertoric variety $\mathcal{AM}_{\llambda}$ is embedded as an $\bA$-fixed component of $\mathcal{AM}$.

Since the action of $S_{\mathsf{v}}$ on $\mathsf{M}_{\llambda}$ preserves each direct summand, it follows immediately from the definitions that there is a natural identification
\begin{equation}\label{abfac}
\mathcal{AM}_{\llambda}= \prod_{\lambda \in \llambda} \mathcal{A}_{\lambda}
\end{equation}

\subsection{Martin diagram}
In this subsection, we denote by $\mu_{G}: T^*M \to \mathfrak{g}_{\mathsf{v}}^{*}$ the moment map for the $G_{\mathsf{v}}$ action on $M$ and by $\mu_{S}:T^*M \to \mathfrak{s}^*$ the moment map for the $S_{\mathsf{v}}$ action on $M$. Similarly, we write the chosen $G_{\mathsf{v}}$ character as $\theta_{G}$ and the induced $S_{\mathsf{v}}$ character as $\theta_{S}$, whereas we previously denoted both by $\theta$. 

The natural inclusion $S_{\mathsf{v}} \hookrightarrow G_{\mathsf{v}}$ induces a map 
$$
\iota^*: \mathfrak{g}_{\mathsf{v}}^* \to \mathfrak{s}^*
$$
and it is known that
$$
\mu_{S}=\iota^* \circ \mu_G
$$

Let $B \subset G_{\mathsf{v}}$ be a Borel subgroup such that $S_{\mathsf{v}} \subset B$. Fix a maximal compact subgroup $U$ of $G_{\mathsf{v}}$ and a $U$ invariant Hermitian metric on $M$. This provides a moment map
$$
\mu_{U}=(\mu_{U,\mathbb{R}},\mu_{U,\mathbb{C}}): T^{*}M \to \mathfrak{u}^{*} \oplus \mathfrak{u}_{\mathbb{C}}^{*}, \quad \mathfrak{u}=\Lie(U)
$$
The hyperk\"ahler description of the quiver variety is given by
$$
\mathcal{M}\cong \mu_{U,\mathbb{R}}^{-1}(\eta) \cap \mu_{U,\mathbb{C}}^{-1}(0)/U
$$
where $\eta$ is the differential of the pullback of $\theta_{G}$ under $U\hookrightarrow G_{\mathsf{v}}$. There is a natural map
$$
\pi:\mu_{U,\mathbb{R}}^{-1}(\eta) \cap \mu_{U,\mathbb{C}}^{-1}(0)/(U\cap S) \to \mathcal{M}
$$
obtained by taking the quotient by the larger group. The fibers of this map are flag varieties.

Let $\mathfrak{b}$ be the Lie algebra of $B$. Since $\mathfrak{s} \subset \mathfrak{b}$, if $\mu_{G}(p) \in \mathfrak{b}^{\perp}$, then $\mu_{S}(p)=0$. This provides a map
$$
\mu^{-1}_{G}(\mathfrak{b}^{\perp}) \to \mu^{-1}_{S}(0)
$$
Since this map is $S_{\mathsf{v}}$ equivariant and preserves $\theta_S$ stability, it descends to the quotient
$$
\mathsf{j}_-:\mu^{-1}_{G}(\mathfrak{b}^{\perp})^{\theta_S-ss}/ S_{\mathsf{v}} \to \mathcal{AM}
$$ 

Recall that $\mu_{U,\mathbb{C}}$ is actually just $\mu_{G}$, the usual moment map for $G_{\mathsf{v}}$. And since $0 \in \mathfrak{g}^*$ vanishes on $\mathfrak{b}$, we get a map
$$
\mu^{-1}_{U,\mathbb{C}}(0) \to \mu^{-1}_{G}(\mathfrak{b}^{\perp})
$$
Also, since $p\in \mu_{U,\mathbb{R}}^{-1}(\eta)$ implies that $p$ is $\theta_{G}$-stable, and since $\theta_{G}$-stability implies $\theta_{S}$-stability, we get a map
$$
\mathsf{j}_{+}:\mu^{-1}_{U,\mathbb{R}}(\eta) \cap \mu^{-1}_{U,\mathbb{C}}(0)/(U \cap S_{\mathsf{v}}) \to  \mu^{-1}_{G}(\mathfrak{b}^{\perp})^{\theta_{S}-ss}/S_{\mathsf{v}}
$$

Overall, this provides us with a diagram
\begin{equation*}
\begin{tikzcd}
\mathsf{Fl}:=\mu_{U,\mathbb{R}}^{-1}(\eta)\cap \mu_{U,\mathbb{C}}^{-1}(0)/(U \cap S_{\mathsf{v}}) \arrow[r,"\mathsf{j}_{+}"] \arrow[d,"\pi"] & \mu^{-1}_{G}(\mathfrak{b}^{\perp})^{\theta_S-ss}/ S_{\mathsf{v}} \arrow[r,"\mathsf{j}_{-}"] & \mathcal{AM} \\ 
\mathcal{M}
\end{tikzcd}
\end{equation*}
as in section 4.3.1 of \cite{AOElliptic}.

For a fixed point $\llambda \in \mathcal{M}^{\bA}$, let $\mathsf{Fl}'$ be the component of $\pi^{-1}(\llambda)^{\bA}$ such that the normal $\bA$-weights to $\mathsf{Fl}'$ in $\pi^{-1}(\llambda)$ are repelling. Then we have a similar diagram:
\begin{equation*}
\begin{tikzcd}
\mathsf{Fl}' \arrow[r,"\mathsf{j}_{+}'"] \arrow[d,"\pi'"] & T^*\mathsf{M}_{\llambda} \cap \mu^{-1}_{G}(\mathfrak{b}^{\perp})^{\theta_S-ss}/ S_{\mathsf{v}} \arrow[r,"\mathsf{j}_{-}'"] & \mathcal{AM}_{\llambda} \\ 
\{\llambda\}
\end{tikzcd}
\end{equation*}

\subsection{Construction of stable envelopes}\label{sheafdiagram}

This gives us the following map of sheaves:
\begin{equation*}
\begin{tikzcd}[column sep=huge]
\mathscr{U}'\arrow{r}{ \mathsf{j}'_{- *} \circ ({\mathsf{j}'_{+}}^{*})^{-1}\circ {\pi'_{*}}^{-1}} \arrow{d}{\text{Stab}_{\mathfrak{C},T^{1/2}}}& \Theta\left(T^{1/2}\mathcal{AM}_{\llambda}\right)\otimes \mathscr{U}'\arrow{d}{\text{Stab}'_{\mathfrak{C},T^{1/2}_{\mathcal{AM}}}} \\
\Theta\left(T^{1/2}\mathcal{M}\right)\otimes \mathscr{U} & \Theta\left(T^{1/2}\mathcal{AM}\right)\otimes \mathscr{U}\arrow[swap]{l}{\pi_{*}\circ \mathsf{j}_{+}^{*} \circ \mathsf{j}_{- *}^{-1}}
\end{tikzcd}
\end{equation*}
where $\mathscr{U}$ is the universal line bundle on the extended elliptic cohomology of $\mathcal{M}$, $\mathscr{U}'$ is the shift of the universal line bundle  on the extended elliptic cohomology of $\mathcal{M}^{\bA}$, $\Theta$ stands for the elliptic Thom class, and $T^{1/2}_{\mathcal{AM}}$ is a suitable chosen polarization of $\mathcal{AM}$, see sections 2.7.3, 3.3.2, and 2.6.3 in \cite{AOElliptic}.

In the above diagram, we implicitly understand all maps to be maps of the pushfowards of the corresponding sheaves to the base of the extended elliptic cohomology. A-priori, the base of the extended elliptic cohomology of $\mathcal{AM}$ differs from that of $\mathcal{M}$. But as discussed in section 7.3 of \cite{SmirnovElliptic}, the natural map from characters of $G_{\mathsf{v}}$ to characters of $S_{\mathsf{v}}$ allows one to resolve this discrepancy. Concretely, the K\"ahler parameters of $\mathcal{AM}$ correspond bijectively to the $1$-dimensional coordinate subspaces of $V_i$ for $i \in I$. For a fixed $\llambda \in \mathcal{M}^{\bA}$, we identify the K\"ahler parameters of $\mathcal{AM}$ with boxes in $\llambda$ using the decomposition (\ref{boxdec}). These are mapped to the K\"ahler parameters of $\mathcal{M}$ by
$$
z_{a} \mapsto z_{c_a} \text{ where } a \in \lambda \text{ and }  \lambda \in  \llambda
$$

In the original paper \cite{AOElliptic}, elliptic stable envelopes for Nakajima varieties are defined with reference to the above diagram as
\begin{equation}\label{stabdef}
\stab_{\mathfrak{C},T^{1/2}}(\llambda)=\pi_{*} \circ \mathsf{j}_{+}^{*} \circ (\mathsf{j}_{-*})^{-1} \circ \stab'_{\mathfrak{C},T^{1/2}_{\mathcal{AM}}} \circ \mathsf{j}_{-*}' \circ ({\mathsf{j}'_{+}}^{*})^{-1} \circ {\pi'_{*}}^{-1}
\end{equation}

There are some technical points with defining the inverses in the above formula which are discussed in sections 4.3.11 and 4.3.12 of \cite{AOElliptic}.

\subsection{}
Recall from section \ref{abel} that for each $\lambda \in \llambda$ we have varieties $\mathcal{M}_{\lambda}$ with corresponding abelianizations $\mathcal{A}_{\lambda}$. For each of these, there exists a Martin diagram, and the corresponding maps $\pi'_{\lambda}$, $\mathsf{j}'_{\lambda,+}$, and $\mathsf{j}'_{\lambda,-}$, along with their pushforwards and pullbacks.

\begin{Proposition}\label{factor}
Under the natural identifications induced by $\mathcal{AM}_{\llambda} = \prod_{\lambda \in \llambda} \mathcal{A}_{\lambda}$, each of the maps $\pi'_{*}$, ${\mathsf{j}'_{+}}^{*}$, and $\mathsf{j}'_{-*}$ factors into the direct product of maps:
$$
\pi'_{*}=\prod_{\lambda \in \llambda} \pi'_{\lambda,*}, \quad {\mathsf{j}'_{+}}^{*}= \prod_{\lambda \in \llambda} {\mathsf{j}'_{\lambda,+}}^{*}, \quad \mathsf{j}'_{-*}= \prod_{\lambda \in \llambda} \mathsf{j}'_{\lambda,-*}
$$
\end{Proposition}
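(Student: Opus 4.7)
The strategy is to show that the $S_{\mathsf{v}}$-equivariant geometry in play, as well as the Borel $B$ and the Martin diagram for $\llambda$, all respect the direct sum decomposition $\mathsf{M}_{\llambda} = \bigoplus_{\lambda \in \llambda} M_{\lambda}$, so that the three maps split into direct products of their $\lambda$-by-$\lambda$ versions.

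First I would verify that the $S_{\mathsf{v}}$-action is compatible with the decomposition. Using the identification in (\ref{boxdec}), each summand $V^{i}_{\lambda} \subset \mathbb{C}^{\mathsf{v}_i}$ is a coordinate subspace, so $S_{\mathsf{v}} = \prod_{\lambda \in \llambda} S_{\lambda}$ with each $S_{\lambda}$ acting only on $T^{*}M_{\lambda}$. Consequently the moment map $\mu_{S}$ restricted to $T^{*}\mathsf{M}_{\llambda}$ is block-diagonal, the stability character $\theta$ restricts to $\theta_{\lambda}$ on each block, and we recover (\ref{abfac}) as $\mathcal{AM}_{\llambda} = \prod_{\lambda} \mathcal{A}_{\lambda}$. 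The key point is that the same decomposition holds before we take any GIT quotient, so it will propagate through each step of the Martin diagram.

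Next I would show that the Martin diagram for $\llambda$ is the product of the Martin diagrams for the individual $\lambda$'s. The stabilizer inside $G_{\mathsf{v}}$ of the component of $\mathcal{M}^{\bA}$ containing $\llambda$ is the Levi $L := \prod_{\lambda \in \llambda} G_{\lambda}$. I would choose the Borel $B \subset G_{\mathsf{v}}$ compatibly, so that $B \cap L = \prod_{\lambda} B_{\lambda}$ with $B_{\lambda} \subset G_{\lambda}$ a Borel containing $S_{\lambda}$; the condition $\mathfrak{b}^{\perp}$ appearing in the definition of $\mathsf{j}_{\pm}$ then restricts to $\bigoplus_{\lambda} \mathfrak{b}_{\lambda}^{\perp}$ on $T^{*}\mathsf{M}_{\llambda}$. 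Similarly the maximal compact $U$ and its real moment map can be chosen to factor. The fiber $\pi'^{-1}(\llambda) = G_{\mathsf{v}}\cdot \llambda / B$ then decomposes as a product of flag varieties $\prod_{\lambda} G_{\lambda}/B_{\lambda}$, and the component $\mathsf{Fl}'$ selected by the $\bA$-repelling condition is $\prod_{\lambda} \mathsf{Fl}'_{\lambda}$ because the $\bA$-action on $\pi'^{-1}(\llambda)$ is block-diagonal in the $\lambda$ factors, so a normal weight is repelling iff each factorwise normal weight is. This gives $\pi' = \prod_{\lambda} \pi'_{\lambda}$, $\mathsf{j}'_{+} = \prod_{\lambda} \mathsf{j}'_{\lambda,+}$, and $\mathsf{j}'_{-} = \prod_{\lambda} \mathsf{j}'_{\lambda,-}$ as morphisms of schemes. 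Pushforward and pullback of the relevant universal sheaves commute with these products (using K\"unneth and the fact that each $\pi'_{\lambda}$ is proper with flag-variety fibers), yielding the desired factorization of the three maps on sheaves.

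The main obstacle I expect is the choice of Borel $B$ and verifying that the component selection defining $\mathsf{Fl}'$ matches the product $\prod \mathsf{Fl}'_{\lambda}$. Since the chamber $\mathfrak{C}$ determines attracting/repelling directions via a cocharacter $\sigma$ of $\bA$, and since $\bA$ acts preserving the decomposition $T^{*}\mathsf{M}_{\llambda} = \bigoplus T^{*}M_{\lambda}$, the weight-space decomposition of $T_{\llambda}\pi'^{-1}(\llambda)$ is compatible with the product structure; this reduces the issue to the observation that a sum of weights lies in the positive half-space iff each summand does, which is immediate once $B$ is chosen to respect the block structure.
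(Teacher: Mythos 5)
Your proposal is correct and follows essentially the same route as the paper: the paper's proof is a one-line observation that everything factors because the prequotient data decomposes as $\mathsf{M}_{\llambda}=\bigoplus_{\lambda\in\llambda}M_{\lambda}$, and your argument simply spells out why this decomposition propagates through the torus $S_{\mathsf{v}}=\prod_{\lambda}S_{\lambda}$, the Borel, the moment-map conditions, and the flag-variety fibers. The extra detail you supply (block-diagonality of $\mu_S$, compatibility of $B$ with the Levi, and the product structure of $\mathsf{Fl}'$) is exactly the content the paper leaves implicit.
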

\begin{proof}
This follows from the decomposition at the level of prequotient data
$$
M_{\llambda}= \bigoplus_{\lambda \in \llambda} M_{\lambda}
$$
from section \ref{abel}.
\end{proof}

\subsection{Distinguished collection of fixed points}
Let $\lambda$ be a partition. Points in $\mathcal{A}_{\lambda}$ are represented by tuples of linear maps. We identify basis vectors of $\bigoplus_{i} V^{i}_{\lambda}$ with boxes in $\lambda$ so that an element of $M_{\lambda}$ is given by a collection of matrices $(X_i)_{i=0,\ldots,r-1}$, where $X_i \in Hom(\mathbb{C}^{d^{i}_{\lambda}},\mathbb{C}^{d^{i+1}_{\lambda}})$, along with $I \in Hom(\mathbb{C},V^{r_{\lambda}}_{\lambda})$. 

Given a tree $\tr$ in $\lambda$, we define an action of a torus $\mathbb{C}^{\times}_{\tr}$ on $M_{\lambda}$ as follows. Let $\epsilon$ be the coordinate on $\mathbb{C}^{\times}_{\tr}$. If $a, b \in \lambda$ are two boxes such that $c_b=c_a+1$ and the oriented edge $a\to b$ is in the tree $\tr$, then the $(b,a)$-entry of $X_{c_a}$ is scaled by $\epsilon^{h_b-h_a}$. All other entries in the matrices, as well as the map $I$, are left unchanged.

We now describe a fixed point of the $\mathbb{C}^{\times}_{\tr}$ action on $T^*M_{\lambda}$. With respect to the chosen identification of basis vectors with boxes in $\lambda$, the torus $S_{\lambda}$ has a coordinate for each box of $\lambda$. We consider the compensating map given in coordinates by 
$$
\epsilon \to (\epsilon^{h_a})_{a \in \lambda}
$$

For $a,b \in \lambda$ with $c_b=c_a+1$, we write the $(b,a)$th entry of $X_{c_a}$ as $(X_{c_a})_{a,b}$. Consider the decomposition
$$
M_{\lambda}=M_0\oplus M_1
$$
where $M_0$ consists of matrices $X_i$ such that
$$
(X_i)_{a,b}\neq 0 \implies a \to b \in \tr
$$
along with the maps $I$ which send $1$ to a multiple of the root box. The space $M_1$ consists of matrices $X_i$ such that
$$
(X_i)_{a,b}\neq 0 \implies a \to b \notin \tr
$$
along with the maps $I$ have $0$ in the root box component. By the general theory of hypertoric varieties, a representative of a fixed point defined by this data is given by an element of $T^*M_0$ with exactly $|\lambda|$ nonzero components.

In summary, we have described a torus action and a fixed point to a tree $\tr$ in $\lambda$. We will abuse notation and refer to this point simply as $\tr$.

\subsection{Stable envelopes of trees}
Given $\llambda$ and a tuple of trees $\trs \in \lt$ such that $\tr_{\lambda}$ denotes the tree in the partition $\lambda$, we let 
$$
\bT_{\trs}:= \prod_{\lambda \in \llambda} \mathbb{C}^{\times}_{\tr_{\lambda}}
$$
We understand that $\mathbb{C}^{\times}_{\tr_{\lambda}}$ acts naturally on the factor $\mathcal{A}_{\lambda}$ inside of $\mathcal{AM}_{\llambda}=\prod_{\lambda \in \llambda} \mathcal{A}_{\lambda}$. Abusing notation, we denote by $\trs$ the $\bT_{\trs}$-fixed point $\prod_{\lambda} \tr_{\lambda} \subset \mathcal{AM}_{\llambda}$.

Let $\mathfrak{C}''_{\tr_{\lambda}}\subset \Lie_{\mathbb{R}}(\mathbb{C}^{\times}_{\tr_{\lambda}})$ denote the chamber of $\mathbb{C}^{\times}_{\tr_{\lambda}}$ made up of real cocharacters that pair positively with the coordinate $\epsilon$ on $\mathbb{C}^{\times}_{\tr_{\lambda}}$ described in the previous subsection. 

Since we have 
$$
\Lie_{\mathbb{R}}(\bT_{\trs})=\bigoplus_{\lambda \in \llambda} \Lie_{\mathbb{R}}(\mathbb{C}^{\times}_{\tr_{\lambda}})
$$ 
we let $\mathfrak{C}''$ be the chamber of $\bT_{\trs}$ whose elements are made up of positive real linear combinations of the elements of $\mathfrak{C}''_{\lambda}$ for each $\lambda \in \llambda$. 


Let $\mathfrak{C}'$ be the chamber of $\bA \times \bT_{\trs}$ made up of positive real linear combinations of elements of $\mathfrak{C}$ and $\mathfrak{C}''$ (recall that $\mathfrak{C}$ is the chosen chamber of the torus $\bA$).

The fixed point $\trs$ can be thought of as a $\bT_{\trs}$-fixed point of $\mathcal{AM}_{\llambda}$, or as a $\bA \times \bT_{\trs}$-fixed point of $\mathcal{AM}$. We will the stable envelope of $\trs$ in both cases.


Recall from section \ref{formula2} the functions
$$
S_1(\xx;q,t_1,t_2)=\prod_{i=0}^{r-1} \prod_{\substack{a,b, \in \llambda \\ \overline{c_a}=t(\overline{i},\overline{i+1}) \\ \overline{c_b}=h(\overline{i},\overline{i+1}) \\ \rho_a+1 < \rho_b}} \vartheta( t_{+} x_a/x_b) \prod_{\substack{a,b, \in \llambda \\ \overline{c_a}=t(\overline{i},\overline{i+1}) \\ \overline{c_b}=h(\overline{i},\overline{i+1}) \\ \rho_b < \rho_a +1}} \vartheta( t_{-}x_b/x_a) 
$$
and
$$
S_2(\xx,\uu;q,t_1,t_2)= \prod_{i=0}^{r-1} \prod_{j=1}^{\mathsf{w}_i}\left( \prod_{\substack{a \in \llambda \\ \overline{c_a}= i  \\ \rho_a \leq \rho_{r_{i,j}} }} \vartheta(x_a/u^{(i)}_{j}) \prod_{\substack{a \in \llambda \\ \overline{c_a}=i \\ \rho_{r_{i,j}} < \rho_a }} \vartheta(t_1 t_2 u^{(i)}_{j}/x_a)\right)
$$

For each $\lambda \in \llambda$, let $S_1^{\lambda}(\xx;q,t_1,t_2)$ (resp. $S_2^{\lambda}(\xx,\uu;q,t_1,t_2)$) consist of the terms of $S_1(\xx;q,t_1,t_2)$ (resp. $S_2(\xx,\uu;q,t_1,t_2)$) that only involve products over boxes in $\lambda$. So
$$
S_1^{\lambda}(\xx;q,t_1,t_2)=\prod_{i=0}^{r-1} \prod_{\substack{a,b, \in \lambda \\ \overline{c_a}=t(i,i+1) \\ \overline{c_b}=h(i,i+1) \\ \rho_a+1 < \rho_b}} \vartheta( t_{+} x_a/x_b) \prod_{\substack{a,b, \in \lambda \\ \overline{c_a}=t(i,i+1) \\ \overline{c_b}=h(i,i+1) \\ \rho_b < \rho_a +1}} \vartheta( t_{-}x_b/x_a) 
$$
and
$$
S_2^{\lambda}(\xx,\uu;q,t_1,t_2)=\prod_{i=0}^{r-1} \prod_{j=1}^{\mathsf{w}_i}\left( \prod_{\substack{a \in \lambda \\ \overline{c_a}= i  \\ \rho_a \leq \rho_{r_{i,j}} }} \vartheta(x_a/u_{i,j}) \prod_{\substack{a \in \lambda \\ \overline{c_a}=i \\ \rho_{r_{i,j}} < \rho_a }} \vartheta(t_1 t_2 u_{i,j}/x_a)\right)
$$
which we abbreviate by $S_1^{\lambda}$ and $S_2^{\lambda}$.

\begin{Proposition}\label{trstab}
Up to a polarization dependent shift of the K\"ahler parameters by a power of $t_1 t_2$, the elliptic stable envelope of the $\bA \times \bT_{\trs}$-fixed point $\trs \in \mathcal{AM}$ is
$$
\text{Stab}_{\mathfrak{C}'}(\trs)=S_{1} S_{2} \prod_{\lambda \in \llambda} \atrwt_{\tr_{\lambda}}
$$
where
$$
\atrwt_{\tr_{\lambda}} = (-1)^{\kappa(\tr_{\lambda})} \phi\left(\frac{x_{r_{\lambda}}}{\varphi^{\llambda}_{r_{\lambda}}}, \prod_{a \in [r_{\lambda},\tr_{\lambda}]} z_{a} (t_1 t_2)^{d^{\lambda}_{a}}\right) \prod_{e \in \tr} \phi\left(\frac{x_{h(e)} \varphi^{\llambda}_{t(e)}}{x_{t(e)} \varphi^{\llambda}_{h(e)}}, \prod_{a\in [h(e),\tr_{\lambda}]} z_a (t_1 t_2)^{d^{\llambda}_{a}} \right)
$$
and $r_{\lambda}$ denotes the root box of $\lambda$.

Up to a polarization dependent shift of the K\"ahler parameters by a power of $\hbar$, the elliptic stable envelope of the $\bT_{\trs}$-fixed point $\trs \in \mathcal{AM}_{\llambda}$ is
$$
\text{Stab}_{\mathfrak{C}''}(\trs)= \prod_{\lambda \in \llambda} \text{Stab}_{\mathfrak{C}_{\lambda}''}(\tr_{\lambda})
$$
where 
$$
\text{Stab}_{\mathfrak{C}''_{\lambda}}(\tr_{\lambda}) =  S^{\lambda}_1 S^{\lambda}_2 \atrwt_{\tr_{\lambda}}
$$
\end{Proposition}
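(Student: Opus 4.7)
The two statements assert that certain elliptic stable envelopes on hypertoric varieties (or products thereof) factor into explicit $\vartheta$- and $\phi$-products. My strategy is: (i) prove Part 2 by decomposing $\mathcal{AM}_{\llambda}$ as a product and applying the hypertoric stable envelope formula from \cite{AOElliptic} factor by factor; (ii) obtain Part 1 by combining Part 2 with the composition (``triangle'') law for stable envelopes under enlarging the torus.

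For Part 2, the identification $\mathcal{AM}_{\llambda}=\prod_{\lambda\in\llambda}\mathcal{A}_{\lambda}$ from (\ref{abfac}), together with the definition of $\mathfrak{C}''$ as a positive combination of the $\mathfrak{C}''_{\tr_{\lambda}}$, shows that both the torus $\bT_{\trs}$ and its chamber split as direct sums indexed by $\lambda$. Since elliptic stable envelopes are multiplicative under such direct-product decompositions, this immediately gives
\[
\stab_{\mathfrak{C}''}(\trs)=\prod_{\lambda \in \llambda}\stab_{\mathfrak{C}''_{\lambda}}(\tr_{\lambda}).
\]
To identify each factor with $S^{\lambda}_1 S^{\lambda}_2 \atrwt_{\tr_{\lambda}}$, I apply the explicit hypertoric envelope formula to $\mathcal{A}_{\lambda}$: the tangent weights at $\tr_{\lambda}$ are indexed by edges of $\Sigma_{\lambda}$ (from the $X$-maps) and by the root box (from the $I$-map); the tree $\tr_{\lambda}$ together with the cocharacter of $\mathbb{C}^{\times}_{\tr_{\lambda}}$ described above sorts these weights into attracting and repelling directions; and the resulting theta- and phi-function factors assemble into $S^{\lambda}_1 S^{\lambda}_2$ and $\atrwt_{\tr_{\lambda}}$ respectively. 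The sign $(-1)^{\kappa(\tr_{\lambda})}$ arises from the quasi-periodicity (\ref{thetaphi2}) when reorienting theta factors from the hypertoric ``natural'' orientation to the canonical orientation away from the root box.

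For Part 1, I apply the triangle lemma for stable envelopes under enlarging the torus (as in \cite{AOElliptic} Section 3 and Section 7 of \cite{SmirnovElliptic}) to $\bA\times\bT_{\trs}$ with chamber $\mathfrak{C}'=\mathfrak{C}+\mathfrak{C}''$: since $\mathcal{AM}_{\llambda}$ is the $\bA$-fixed component of $\mathcal{AM}$ containing $\trs$, one has
\[
\stab_{\mathfrak{C}'}(\trs)=\stab_{\mathfrak{C}}(\mathcal{AM}_{\llambda})\cdot\stab_{\mathfrak{C}''}(\trs).
\]
The first factor is the $\bA$-stable envelope of the whole component $\mathcal{AM}_{\llambda}$, computable by the same hypertoric formula applied to the normal bundle of $\mathcal{AM}_{\llambda}$ inside $\mathcal{AM}$. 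The weights of this normal bundle are precisely the cross-partition entries of the $X,Y,I,J$ matrices; the ordering function $\rho$ from (\ref{rho}) and the head/tail prescription of Section \ref{orient} distribute them into the two $\vartheta$-products making up $S_1$ and $S_2$, and the within-partition contributions exactly cancel the factor $\prod_{\lambda}S^{\lambda}_1 S^{\lambda}_2$ carried by $\stab_{\mathfrak{C}''}(\trs)$. Combining with Part 2 yields $S_1 S_2 \prod_{\lambda}\atrwt_{\tr_{\lambda}}$, as asserted.

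\textbf{Main obstacle.} The crux of the argument is the combinatorial bookkeeping needed to match the hypertoric formula to the specific form of $S_1$, $S_2$, and $\atrwt_{\tr_{\lambda}}$. One must verify: (a) the infinitesimal genericity assumption on $\sigma_a$ from Definition \ref{admisschamb} produces the shift by $1$ that distinguishes $\rho_a+1<\rho_b$ from $\rho_b<\rho_a+1$ in the two products of $S_1$; (b) the head/tail functions of Section \ref{orient} correctly select $t_{+}$ versus $t_{-}$ factors in the cross-partition $\vartheta$'s; and (c) the polarization-dependent K\"ahler shifts produced by the elliptic Thom class exactly match the exponents $d^{\llambda}_a$ appearing inside $\atrwt_{\tr_{\lambda}}$. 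These are all dictionary-style verifications rather than conceptual obstacles, but they occupy the bulk of the computational work.
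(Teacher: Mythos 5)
Your proposal follows essentially the same route as the paper: the paper's proof reduces to the product decomposition $\mathcal{AM}_{\llambda}=\prod_{\lambda\in\llambda}\mathcal{A}_{\lambda}$ and then cites Proposition 6 of \cite{SmirnovElliptic} (itself an application of the explicit hypertoric stable envelope formula), noting that a straightforward modification handles the more general chambers; your factor-by-factor hypertoric computation, the sorting of weights by the tree cocharacter, and the triangle-lemma step passing from $\mathfrak{C}''$ to $\mathfrak{C}'$ are precisely the content of that modification. The one quibble is the word ``cancel'' in your Part 1: the within-partition terms of $S_1S_2$ are \emph{supplied by}, not cancelled against, the factor $\prod_{\lambda}S_1^{\lambda}S_2^{\lambda}$ carried by $\stab_{\mathfrak{C}''}(\trs)$, though your final assembled formula is the correct one.
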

\begin{Remark}
The careful reader will notice that we did not specify a polarization for the varieties $\mathcal{AM}$ or $\mathcal{AM_{\llambda}}$. This is justified by the fact that stable envelopes for different polarizations differ from each other only by a shift of the K\"ahler parameters, see section 3.3.7 in \cite{AOElliptic}. 
\end{Remark}
\begin{proof}
By the canonical decomposition (\ref{abfac}), the stable envelope of the fixed point $\trs$ is the product of the stable envelopes of $\tr_{\lambda}$ for each $\lambda \in \llambda$. The latter were calculated in Proposition 6 of \cite{SmirnovElliptic} using the known formulas for stable envelopes of hypertoric varieties. Our result differs slightly from Smirnov's as we allow for more general chambers. A straightforward modification of the proof of Proposition 6 in \cite{SmirnovElliptic} gives our result.
\end{proof}

\begin{Proposition}\label{formalinverse}
$$
\pi'_{*} \circ {\mathsf{j}'_{+}}^{*} \circ (\mathsf{j}'_{-*})^{-1}\left(\sum_{\trs \in \lt} \text{Stab}_{\mathfrak{C}''}(\trs) \right)=1
$$
\end{Proposition}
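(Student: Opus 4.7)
The plan is to decompose both the operator and the sum into products indexed by the individual partitions $\lambda \in \llambda$, and then invoke the standard completeness identity for elliptic stable envelopes of hypertoric varieties.

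By Proposition \ref{factor}, the operator $\pi'_{*}\circ{\mathsf{j}'_{+}}^{*}\circ(\mathsf{j}'_{-*})^{-1}$ factors as the product, over $\lambda \in \llambda$, of the analogous operators $\pi'_{\lambda,*}\circ{\mathsf{j}'_{\lambda,+}}^{*}\circ(\mathsf{j}'_{\lambda,-*})^{-1}$ acting on the factor $\mathcal{A}_{\lambda}$ of $\mathcal{AM}_{\llambda}=\prod_{\lambda}\mathcal{A}_{\lambda}$. Since $\lt=\prod_{\lambda}\Gamma_{\lambda}$ and, by Proposition \ref{trstab}, $\text{Stab}_{\mathfrak{C}''}(\trs)=\prod_{\lambda}\text{Stab}_{\mathfrak{C}''_{\lambda}}(\tr_{\lambda})$, the sum also factors as
$$
\sum_{\trs\in\lt}\text{Stab}_{\mathfrak{C}''}(\trs)=\prod_{\lambda\in\llambda}\;\sum_{\tr\in\Gamma_{\lambda}}\text{Stab}_{\mathfrak{C}''_{\lambda}}(\tr).
$$
The proposition therefore reduces to establishing, for each $\lambda\in\llambda$, the single-partition identity
$$
\pi'_{\lambda,*}\circ{\mathsf{j}'_{\lambda,+}}^{*}\circ(\mathsf{j}'_{\lambda,-*})^{-1}\!\left(\sum_{\tr\in\Gamma_{\lambda}}\text{Stab}_{\mathfrak{C}''_{\lambda}}(\tr)\right)=1.
$$

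The reduced identity takes place entirely in the hypertoric setting: $\mathcal{M}_{\lambda}$ is a single point, $\mathcal{A}_{\lambda}$ is a hypertoric variety, and its $\mathbb{C}^{\times}_{\tr}$-fixed points are precisely the $2^{m}$ tree fixed points indexed by $\tr\in\Gamma_{\lambda}$. The identity is the elliptic incarnation of the familiar fact that the sum of the stable envelopes of all torus-fixed points of a hypertoric variety represents the ``identity class'' produced by the forward Martin transform of $1$ on the one-point target $\mathcal{M}_{\lambda}$; its backward transform (the composition on the left) then returns $1$. This is essentially the hypertoric computation in the proof of Proposition 7 of \cite{SmirnovElliptic}, and it carries over verbatim in our setting: the admissibility condition on $\mathfrak{C}$ only alters the explicit form of each $\text{Stab}_{\mathfrak{C}''_{\lambda}}(\tr)$ and not the combinatorial completeness mechanism on which the identity rests.

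The main obstacle is the rigorous treatment of the formal inverse $(\mathsf{j}'_{-*})^{-1}$, which is not literally the inverse of a push-forward of sheaves but is defined only on the submodule generated by the stable envelope classes; the precise framework is in sections 4.3.11--4.3.12 of \cite{AOElliptic}. One has to verify that $\sum_{\trs}\text{Stab}_{\mathfrak{C}''}(\trs)$ lies in that submodule, which is immediate by construction since each summand is a stable envelope. Once this technicality is handled, the hypertoric completeness identity on each factor finishes the proof.
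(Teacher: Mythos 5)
Your proposal follows the paper's proof essentially verbatim: factor the operator and the sum over the individual partitions via Proposition \ref{factor} together with the product structure of $\lt$ and of $\text{Stab}_{\mathfrak{C}''}(\trs)$, then invoke the single-partition completeness identity from \cite{SmirnovElliptic}. The only differences are cosmetic — the paper cites Theorem 5 of \cite{SmirnovElliptic} for that identity rather than the computation behind its Proposition 7, and your extra remarks on the formal inverse $(\mathsf{j}'_{-*})^{-1}$ are correct but not part of the paper's argument.
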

\begin{proof}
From Proposition \ref{factor}, we have
$$
\pi'_{*} \circ {\mathsf{j}'_{+}}^{*} \circ (\mathsf{j}'_{-*})^{-1}\left(\sum_{\trs \in \lt} \text{Stab}_{\mathfrak{C}''}(\trs) \right) = \prod_{\lambda \in \llambda} \pi'_{\lambda,*} \circ {\mathsf{j}'_{\lambda,+}}^{*} \circ (\mathsf{j}'_{\lambda,- *})^{-1}\left(\sum_{\tr \in \Gamma_{\lambda}} \stab_{\mathfrak{C}''_{\lambda}}(\tr_{\lambda})\right)
$$
By Theorem 5 in \cite{SmirnovElliptic}, for each $\lambda$, 
$$
\sum_{\tr \in \Gamma_{\lambda}} \stab_{\mathfrak{C}''_{\lambda}}(\tr_{\lambda})=1
$$
which gives the result.
\end{proof}

\subsection{Conclusion of proof}
Recall from section \ref{sheafdiagram} that K\"ahler parameters of $\mathcal{AM}$ must be identified with those of $\mathcal{M}$. In particular, this is equivalent to the substitution $z_a \mapsto z_{c_a}$ for $a \in \lambda$, $\lambda \in \llambda$.

From Proposition \ref{formalinverse} and (\ref{stabdef}), we have
$$
\stab_{\mathfrak{C},T^{1/2}}(\llambda)=\pi_{*} \circ \mathsf{j}^{*}_{+} \circ (\mathsf{j}_{-*})^{-1} \circ \stab'_{\mathfrak{C}}\left(\sum_{\trs \in \lt} \stab_{\mathfrak{C}''}(\trs) \right)
$$
The triangle lemma for stable envelopes, see \cite{AOElliptic} section 3.1, says that the composition of two stable envelopes is another stable envelope. In particular, we have 
$$
\stab_{\mathfrak{C},T^{1/2}}(\llambda)=\pi_{*} \circ \mathsf{j}^{*}_{+} \circ (\mathsf{j}_{-*})^{-1} \circ \left( \sum_{\trs \in \lt} \stab_{\mathfrak{C}'}(\trs) \right)
$$

For the rest of the maps, we have
\begin{Proposition}\label{maps}

Let $f(\xx)$ be a section of $\Theta(T^{1/2} \mathcal{AM})\otimes \mathscr{U}$. Then
$$
\pi_{*} \circ \mathsf{j}^*_{+} \circ (\mathsf{j}_{-*})^{-1}\left(f(\xx)\right)=\sym_{0}\,  \sym_{1} \ldots \sym_{r-1} \frac{f(\xx)}{\prod\limits_{\substack{a,b \in \llambda \\ \overline{c_a}=\overline{c_b }\\ \rho_a<\rho_b}} \vartheta\left(x_a/x_b\right)\vartheta\left(t_1 t_2 x_a/x_b\right)}
$$
\end{Proposition}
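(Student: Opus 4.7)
The claim is a pointwise, elliptic analogue of the abelianization formula of Martin for symplectic reductions, and I would prove it by analyzing each of the three maps in the composition $\pi_{*} \circ \mathsf{j}_{+}^{*} \circ (\mathsf{j}_{-*})^{-1}$ separately, exactly as in section 4.3 of \cite{AOElliptic} and section 7 of \cite{SmirnovElliptic}. The strategy is to identify, for each factor, the contribution to the denominator in terms of $\vartheta$-functions of characters of $S_{\mathsf{v}}$, and to check that the ordering condition $\rho_a < \rho_b$ coincides with the positivity convention induced by the choice of Borel $B \subset G_{\mathsf{v}}$ containing $S_{\mathsf{v}}$.

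First I would handle $(\mathsf{j}_{-*})^{-1}$. The embedding $\mathsf{j}_{-}: \mu_{G}^{-1}(\mathfrak{b}^{\perp})^{\theta_{S}-ss}/S_{\mathsf{v}} \hookrightarrow \mathcal{AM}$ is a closed embedding whose conormal bundle is $S_{\mathsf{v}}$-equivariantly identified, via the moment map together with the symplectic form on $T^*M$, with the nilradical $\mathfrak{n}$ of $\mathfrak{b}$ paired with the direction of weight $\hbar = t_1 t_2$. Choosing $B$ so that its positive roots are exactly those characters $x_a/x_b$ with $\overline{c_a}=\overline{c_b}$ and $\rho_a < \rho_b$, the inverse Thom pushforward divides by the Thom class of this normal bundle. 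In elliptic cohomology this contributes the factor $\prod_{\rho_a<\rho_b,\; \overline{c_a}=\overline{c_b}} \vartheta(t_1 t_2\, x_a/x_b)$ to the denominator. Next, $\mathsf{j}_{+}^{*}$ is restriction along the inclusion $\mathsf{Fl} \hookrightarrow \mu_{G}^{-1}(\mathfrak{b}^{\perp})^{\theta_{S}-ss}/S_{\mathsf{v}}$; at the level of the off-shell formula this merely restricts the section and does not alter the denominator.

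Finally, $\pi_{*}: \mathsf{Fl} \to \mathcal{M}$ is the proper pushforward along a flag bundle with fiber $G_{\mathsf{v}}/B = \prod_{i} GL_{\mathsf{v}_i}/B_i$. For flag bundles in equivariant elliptic cohomology, pushforward is given by Weyl symmetrization against the Weyl denominator (see section 4.3 of \cite{AOElliptic}): this replaces a section $f(\xx)$ by $\sym_{0} \cdots \sym_{r-1}(f(\xx)/\prod_{\alpha \in \Phi^{+}} \vartheta(\alpha))$ where $\Phi^{+}$ are the positive roots of $G_{\mathsf{v}}$ with respect to $B$. With our Borel choice, these positive roots produce exactly the factor $\prod_{\rho_a<\rho_b,\; \overline{c_a}=\overline{c_b}} \vartheta(x_a/x_b)$. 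Combining the two contributions yields the denominator in the statement, so
\[
\pi_{*} \circ \mathsf{j}_{+}^{*} \circ (\mathsf{j}_{-*})^{-1}(f(\xx)) = \sym_{0}\sym_{1}\cdots\sym_{r-1}\frac{f(\xx)}{\prod\limits_{\substack{a,b \in \llambda \\ \overline{c_a}=\overline{c_b}\\ \rho_a<\rho_b}}\vartheta(x_a/x_b)\vartheta(t_1 t_2 x_a/x_b)}.
\]

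The main obstacle is bookkeeping: one must verify that the Borel $B$ can indeed be chosen so that $\mathfrak{b}^{\perp}$-semistability compatibly picks out the ordering $\rho_a < \rho_b$ consistent with the admissible chamber $\mathfrak{C}$, and that the symplectic doubling of the conormal bundle really furnishes the $\hbar$-shifted factor $\vartheta(t_1 t_2\, x_a/x_b)$ rather than the plain $\vartheta(x_a/x_b)$ (which arises from the Weyl denominator of $\pi_{*}$). Both are standard but require careful tracking of the symplectic pairing on $T^*M$ and the identification of the flag variety fiber's tangent weights. The shift of $T^{1/2}$ needed to make the line bundles match is the same polarization-dependent $\hbar$-shift of K\"ahler parameters already noted in Proposition \ref{trstab} and in section 3.3.7 of \cite{AOElliptic}, and is absorbed without affecting the denominator.
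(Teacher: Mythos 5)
Your proposal is correct and follows essentially the same route as the paper, which simply defers to the analogous Proposition 7 of \cite{SmirnovElliptic}: the $\vartheta(t_1t_2\,x_a/x_b)$ factors come from inverting the Thom class of the normal bundle to $\mathsf{j}_{-}$, and the $\vartheta(x_a/x_b)$ factors together with the symmetrization come from pushing forward along the flag-bundle fibers of $\pi$. Your expanded account of the bookkeeping (matching the Borel to the $\rho$-ordering and tracking the $\hbar$-shift on the conormal directions) is exactly the content hidden in that citation.
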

\begin{proof}
The proof of this result is analogous to that of \cite{SmirnovElliptic} Proposition 7.
\end{proof}

By Propositions \ref{trstab} and \ref{maps}, we deduce Theorem \ref{mainthm} up to a polarization dependent shift of the K\"ahler parameters by $t_1 t_2$. Repeating the argument of section 8.3 in \cite{SmirnovElliptic}, one can see that we have set up our formulas so that no further shifts are necessary.

\section{Maple Package \pkg{EllipticStableEnvelope}}\label{pkgsection}
In this final section, we explain the implementation of the formulas in this paper. The Maple package \pkg{EllipticStableEnvelope} is available at the author's website\footnote{\url{www.tarheels.live/dinkins/code}} or on the Maple cloud. In what follows, Maple input will be written using the red {\color{myred} \pkg{typewriter}} font, and Maple output will be written in {\color{blue} blue}. 

In what follows, we will use the following names for the different pieces of data needed:
\begin{align*}
    &\pkg{v}: \text{dimension vector of the quiver variety} \\
    &\pkg{w}: \text{ framing dimension vector} \\
    &\pkg{arrows}: \text{ data determining the polarization} \\
    &\pkg{chamb}: \text{ chamber}
\end{align*}

The procedures provided by the package are \pkg{Attracting}, \pkg{ChamberExample}, \pkg{FixedPoints}, \pkg{NormalForm}, \pkg{PolExample}, \pkg{Polarization}, \pkg{Quasiperiods}, \pkg{Repelling}, \pkg{Restrict}, \pkg{StabMatrix}, \pkg{StableEnvelope}, \pkg{TangentSpace}, \pkg{TautologicalBundle}, \pkg{ThomClass}, and \pkg{VirtualTangentSpace}.

\subsection{Input data}\label{input}
The data of an affine type $A_r$ quiver variety is contained in two dimension vectors: $\mathsf{v}$ and $\mathsf{w}$. In the Maple package, these are each described by a list of integers. For definiteness, we will illustrate the code here for the specific example of
\begin{align*}
&{\color{myred}\pkg{v:=[2,2,3]}:}\\
&{\color{myred}\pkg{w:=[2,1,0]}:}
\end{align*}

For the computation of stable envelopes, the other data that we need is a choice of chamber and a choice of polarization. The chamber is equivalent to an ordering of the equivariant parameters. As explained in this paper, we assume that the additional equivariant parameter $a$ related to $t_1$ and $t_2$ is infinitesimally small compared to the framing parameters. The framing parameter will be written as $\pkg{u[i,j]}$ for $1\leq i \leq r$ and $1 \leq j \leq \mathsf{v}_i$. 

A chamber will be stored as a $\pkg{chamb}$, which is a list such that
\begin{itemize}
    \item The first two items in the list are $\pkg{t[1]}$ and $\pkg{t[2]}$, in either order.
    \item Each framing parameter appears exactly once in the list.
\end{itemize}

For $\pkg{v}$ and $\pkg{w}$ above, one particular assignment of a chamber is given by
$$
{\color{myred}\pkg{chamb:=[t[2],t[1],u[1,1],u[1,2],u[2,1]]}}
$$

The list $\pkg{chamb}$ is interpreted in the following way:
\begin{align*}
\pkg{[t[i],t[j],\ldots]} &\iff t_i \text{ is repelling and } \,   t_j \text{ is attracting} \\
\pkg{[\ldots,u[i,j],\ldots,u[k,l]]} &\iff  \frac{u_{i,j}}{u_{k,l}} \text{ is repelling}
\end{align*}

Nakajima quiver varieties have a collection of natural polarizations. We will store this data as a list called $\pkg{arrows}$, which is a list of three lists with $1$'s or $-1$'s, where each of the three lists are of length $r$. The $i$th list specifies the choice of terms in the $i$th row of (\ref{tan}) in the following manner\footnote{We reindex the bundles in (\ref{tan}) so that the tautological bundles are $\mathcal{V}_1,\ldots,\mathcal{V}_r$}:

\begin{itemize}
    \item An entry of $1$ in the $j$th entry of the $1$st list indicates the choice of $t_2 Hom(\mathcal{V}_{j},\mathcal{V}_{j+1})$, while an entry of $-1$ indicates the choice of $t_1 Hom(\mathcal{V}_{j+1},\mathcal{V}_{j})$.

\item An entry of $1$ in the $j$th entry of the $2$nd list indicates the choice of $Hom(\mathcal{W}_{j},\mathcal{V}_{j})$, while an entry of $-1$ indicates $t_1 t_2 Hom(\mathcal{V}_{j},\mathcal{W}_{j})$.

\item An entry of $1$ in the $j$th entry of the $3$rd list indicates the choice of $Hom(\mathcal{V}_{j},\mathcal{V}_{j})$, while an entry of $-1$ indicates $t_1 t_2 Hom(\mathcal{V}_{j},\mathcal{V}_{j})$.

 \end{itemize}

We will refer to such a list of lists using the name $\pkg{arrows}$ because of the natural correspondence of the terms in the first two rows of (\ref{tan}) with arrows in the quiver.
 
For the running example in this section, the choice of
\begin{align*}
   {\color{myred}\pkg{arrows}\pkg{:=[[1,-1,1],[-1,1,1],[-1,-1,1]]}:}
\end{align*} 
corresponds to the polarization
\begin{align*}
     T^{1/2}&=t_2 Hom(\mathcal{V}_1,\mathcal{V}_2) + t_1 Hom(\mathcal{V}_3,\mathcal{V}_2) + t_2 Hom(\mathcal{V}_3,\mathcal{V}_1)  \\
    &+ t_1 t_2 Hom(\mathcal{V}_1,\mathcal{W}_1) + Hom(\mathcal{W}_2,\mathcal{V}_2) + Hom(\mathcal{W}_3,\mathcal{V}_3) \\
    &- t_1 t_2 Hom(\mathcal{V}_1,\mathcal{V}_1) - t_1t_2 Hom(\mathcal{V}_2,\mathcal{V}_2) - Hom(\mathcal{V}_3,\mathcal{V}_3)
\end{align*}

Now that we have described how the data of a quiver variety, a chamber, and a polarization are stored, we will now proceed to describe each of the procedures provided by the package.

\subsection{\pkg{FixedPoints}}\label{pkgFP}
The calling sequence is
$$
\pkg{FixedPoints(v,w,chamb)}
$$
where 
\begin{itemize}
    \item \pkg{v}, \pkg{w}, and \pkg{chamb} are as described in section \ref{input}. 
\end{itemize}
The procedure \pkg{FixedPoints(v,w,chamb)} returns the list of $\bT$-fixed points on the quiver variety determined by \pkg{v} and \pkg{w}, ordered by \pkg{chamb} as in section 6.3.2 in \cite{AOElliptic}. The matrix of the elliptic stable envelope (see \pkg{StabMatrix} below) is upper triangular with respect to this ordering.

As discussed previously, a $\bT$-fixed point is described by $(\mathsf{v},\mathsf{w})$-tuple of partitions. We store the data of a partition as a (possibly empty) list \pkg{L} of nonincreasing positive integers. If a partition corresponds to the $j$th framing at vertex $i$, then we store the data of this partition as $\pkg{[[L],i,j]}$.

For example, the variety determined by $\mathsf{v}=(2,2,3)$, $\mathsf{w}=(2,1,0)$ has 51 $\bT$-fixed points. The following gives an example:
\begin{align*}
    &{\color{myred}\pkg{v:=[2,2,3]:}} \\ &{\color{myred}\pkg{w:=[2,1,0]:}}\\ &{\color{myred}\pkg{chamb:=[t[2],t[1],u[1,1],u[1,2],u[2,1]]:}} \\
    &{\color{myred}\pkg{FixedPoints(v,w,chamb)[1];} }\\
    &{\color{myred}\pkg{FixedPoints(v,w,chamb)[35];}} \\
    & \quad {\color{blue} [[[\,],1,1],[[\,],1,2],[[3,1,1,1,1],2,1]]} \\
    & \quad {\color{blue} [[[3],1,1],[[2],1,2],[[1,1],2,1]]}
\end{align*}




\subsection{\pkg{VirtualTangentSpace}}
The calling sequence is
$$
\pkg{VirtualTangentSpace(v,w)}
$$
where
\begin{itemize}
    \item \pkg{v} and \pkg{w} are as described in section \ref{input}
\end{itemize}
The procedure \pkg{VirtualTangentSpace(v,w)} returns the virtual tangent space (\ref{tan}) on the variety determined by \pkg{v} and \pkg{w}, written in terms of the Chern roots of the tautological bundles: $\pkg{x[i,j]}$ where $1 \leq i \leq r$ and $1\leq j \leq \mathsf{v}_i$.

\subsection{\pkg{TautologicalBundle}}
The calling sequence is
$$
\pkg{TautologicalBundle(v,i)} 
$$
where
\begin{itemize}
    \item \pkg{v} is as in section \ref{mainthmintro}
    \item \pkg{i} is an integer between 1 and the length of \pkg{v}, inclusive.
\end{itemize}
The procedure \pkg{TautologicalBundle(v,i)} returns the $i$th tautological bundle of a variety with dimension vector \pkg{v} expressed in terms of its Chern roots. The description of the tautological bundles in terms of Chern roots does not depend on \pkg{w}. Hence we omit it as a parameter.

\subsection{\pkg{TangentSpace}}
The calling sequence is
$$
\pkg{TangentSpace(v,w,S)}
$$
where
\begin{itemize}
    \item \pkg{v} and \pkg{w} are as in section \ref{input}
    \item \pkg{S} is a list of based partitions indexing a fixed point, as in section \ref{pkgFP}
\end{itemize}
The procedure \pkg{TangentSpace(v,w,S)} returns the $\bT$-character of the tangent space of the variety determined by \pkg{v} and \pkg{w} at the fixed point indexed by $S$.

Consider the following example:
\begin{align*}
   &{\color{myred}\pkg{v:=[2,2,3]}:}\\ 
   &{\color{myred}\pkg{w:=[2,1,0]:}}\\ &{\color{myred}\pkg{chamb:=[t[2],t[1],u[1,1],u[1,2],u[2,1]]:}} \\
    &{\color{myred}\pkg{B:=FixedPoints(v,w,chamb):}} \\
    &{\color{myred}\pkg{TangentSpace(v,w,B[1]);}} \\
    & \quad {\color{blue} \frac{u_{2,1}}{u_{1,2}t_1} + \frac{u_{2,1}}{u_{1,2}t_2^2} + \frac{u_{2,1}}{u_{1,1} t_1} + \frac{u_{2,1}}{u_{1,1} t_2^2}+ \frac{t_1}{t_2^2} + \frac{t_1 t_2^{3} u_{1,1}}{u_{2,1}} + \frac{t_1^2 t_2 u_{1,2}}{u_{2,1}} + \frac{t_1 t_2^{3} u_{1,2}}{u_{2,1}} } \\
   & \quad {\color{blue} + t_2^{3} + \frac{t_1^{2} t_{2} u_{1,1}}{u_{2,1}} }
\end{align*}

\subsection{\pkg{Attracting} and \pkg{Repelling}}
The calling sequences are
\begin{align}
    &\pkg{Attracting(v,w,S,chamb)}\\
    &\pkg{Repelling(v,w,S,chamb)}
\end{align}
where
\begin{itemize}
    \item \pkg{v}, \pkg{w}, and \pkg{chamb} are as in section \ref{input}.
    \item \pkg{S} is a list of based partitions indexing a fixed point, as in section \ref{pkgFP}.
\end{itemize}
The procedure \pkg{Attracting(v,w,S,chamb)} return the attracting part with respect to \pkg{chamb} of the tangent space at \pkg{S} of the variety determined by \pkg{v} and \pkg{w}. Similarly, \pkg{Repelling(v,w,S,chamb)} returns the repelling part.

As an example, consider:
\begin{align*}
   &{\color{myred}\pkg{v:=[2,2,3]:} }\\ 
   &{\color{myred}\pkg{w:=[2,1,0]:}}\\ &{\color{myred}\pkg{chamb:=[t[2],t[1],u[1,1],u[1,2],u[2,1]]:} }\\
    &{\color{myred}\pkg{B:=FixedPoints(v,w,chamb):}} \\
    &{\color{myred}\pkg{Attracting(v,w,B[1],chamb);} }\\
    &{\color{myred}\pkg{Repelling(v,w,B[1],chamb);}}\\
    & \quad {\color{blue} \frac{u_{2,1}}{u_{1,2}t_1} + \frac{u_{2,1}}{u_{1,2}t_2^2} + \frac{u_{2,1}}{u_{1,1} t_1} + \frac{u_{2,1}}{u_{1,1} t_2^2}+ \frac{t_1}{t_2^2}\ } \\
  & \quad {\color{blue}  \frac{t_1 t_2^{3} u_{1,1}}{u_{2,1}} + \frac{t_1^2 t_2 u_{1,2}}{u_{2,1}} + \frac{t_1 t_2^{3} u_{1,2}}{u_{2,1}} + t_2^{3} + \frac{t_1^{2} t_{2} u_{1,1}}{u_{2,1}}}
\end{align*}

\subsection{\pkg{ChamberExample}}
The calling sequence is
$$
\pkg{ChamberExample(v,w)}
$$
where
\begin{itemize}
    \item \pkg{v} and \pkg{w} are as described in section \ref{input}.
\end{itemize}
The procedure $\pkg{ChamberExample(v,w)}$ returns an example of a chamber on the variety determined by \pkg{v} and \pkg{w}. The first element in the returned list is $\pkg{t[2]}$. The equivariant parameters are ordered first based on the vertex from which they originate, and second by the number of the framing they correspond to.
\begin{align*}
   &{\color{myred}\pkg{v:=[2,2,3]:}} \\ 
   &{\color{myred}\pkg{w:=[2,1,0]:}}\\ 
   & {\color{myred}\pkg{ChamberExample(v,w);}}\\
    & \quad {\color{blue} [t_2,t_1,u_{1,1},u_{1,2},u_{2,1}]\ }
\end{align*}

This procedure is mainly intended for convenience, so that one does not have to manually input a chamber once the dimensions are chosen.

\subsection{\pkg{PolExample}}
The calling sequence is
$$
\pkg{PolExample(v,w)}
$$
where
\begin{itemize}
    \item \pkg{v} and \pkg{w} are as in section \ref{input}.
\end{itemize}
The procedure \pkg{PolExample(v,w)} returns a choice of polarization on the variety determined by \pkg{v} and \pkg{w} in the form explained for $\pkg{arrows}$ in section \ref{input}.

\begin{align*}
   &{\color{myred}\pkg{v:=[2,2,3]}:} \\
   &{\color{myred}\pkg{w:=[2,1,0]}:}\\ 
   &{\color{myred} \pkg{PolExample(v,w)};}\\
    & \quad {\color{blue} [[1,1,1],[1,1,1],[1,1,1]]\ }
\end{align*}

As with $\pkg{ChamberExample}$, this procedure is mainly intended for convenience.

\subsection{\pkg{NormalForm}}
The calling sequence is
$$
\pkg{NormalForm(f)}
$$
where
\begin{itemize}
    \item $f$ is some algebraic expression, which may include terms like \pkg{theta(x)} and \pkg{phi(x)} where \pkg{x} is some monomial in the equivariant parameters, the Chern roots of the tautological bundles, and the variable $q$.
\end{itemize}
The procedure \pkg{NormalForm(f)} uses (\ref{thetaphi2}) to return an equal expression with the minimal powers of $q$ inside the arguments of $\pkg{theta}$ or $\pkg{phi}$ terms. While \pkg{NormalForm} does not use more complicated identities like the 3-term theta function identity, it can nevertheless be useful for algebraically checking if two expressions involving such functions are equal. For example:

\begin{align*}
   &{\color{myred}\pkg{f:=theta(q*a)+theta(b/q):}}\\
   &{\color{myred} \pkg{g:=theta(1):}} \\
   &{\color{myred}\pkg{h:=theta(q)/theta(q} \caret \pkg{2):}} \\
   &{\color{myred} \pkg{NormalForm(f);} }\\
   &{\color{myred} \pkg{NormalForm(g);}} \\
   &{\color{myred}\pkg{NormalForm(h);}} \\
   & \quad {\color{blue} -\frac{b \theta(b) a + \theta(a)}{\sqrt{q} a}} \\
   & \quad {\color{blue} 0} \\
   & \quad {\color{blue} -q^{3/2}}
\end{align*}

\subsection{\pkg{ThomClass}}
The calling sequence is
$$
\pkg{ThomClass(f)}
$$
where
\begin{itemize}
    \item \pkg{f} is a Laurent polynomial in the Chern roots of the tautological bundles and equivariant parameters.
\end{itemize}
The procedure \pkg{ThomClass(f)} returns \pkg{theta(f)} if \pkg{f} is a Laurent monomial, and is defined by multiplicativity in general. 

\begin{align*}
    & {\color{myred}\pkg{ThomClass(a+b-c);}} \\
     &  \quad {\color{blue} \frac{\theta(a) \theta(b)}{\theta(c)}}
\end{align*}

\subsection{\pkg{Polarization}}
The calling sequence is
$$
\pkg{Polarization(v,w,arrows)}
$$
where
\begin{itemize}
    \item \pkg{v}, \pkg{w}, and \pkg{arrows} are as in section \ref{input}
\end{itemize}
The procedure \pkg{Polarization(v,w,arrows)} returns the polarization determined by \pkg{arrows} of the variety determined by \pkg{v} and \pkg{w}, expressed in terms of the equivariant parameters and the Chern roots of the tautological bundles.

\subsection{\pkg{Quasiperiods}}
The calling sequence is
$$
\pkg{Quasiperiods(v,w,S,arrows,chamb)}
$$
where
\begin{itemize}
    \item \pkg{v}, \pkg{w}, \pkg{arrows}, and \pkg{chamb} are as in section \ref{input}.
    \item \pkg{S} is the data describing a fixed point as in section \ref{pkgFP}.
\end{itemize}
The procedure \pkg{Quasiperiods(v,w,S,arrows,chamb)} returns a rational expression in theta functions whose quasiperiods match that of the elliptic stable envelope of the fixed point \pkg{S} with respect to the polarization and chamber specified by $\pkg{arrows}$ and $\pkg{chamb}$.

\begin{align*}
      &{\color{myred}\pkg{v:=[1,1]:} }\\ 
      &{\color{myred}\pkg{w:=[1,1]:}}\\ 
   & {\color{myred}\pkg{arrows:=[[1,1],[1,1],[1,1]]:}}\\
   & {\color{myred}\pkg{chamb:=[t[2],t[1],u[1,1],u[2,1]:}} \\
   & {\color{myred}\pkg{S:=FixedPoints(v,w,chamb)[1];}} \\
   & {\color{myred}\pkg{qp:=Quasiperiods(v,w,S,arrows,chamb);}} \\
   &{\color{myred} \pkg{NormalForm(subs(x[2,1]=q*x[2,1],qp)/qp);}} \\
   & \quad {\color{blue} S:=[[[\,],1,1],[[1,1],2,1]]} \\
    & \quad {\color{blue} qp:=\frac{\theta\left(\frac{t_2 x_{2,1}}{x_{2,1}}\right) \theta\left(\frac{t_2 x_{1,1}}{x_{2,1}}\right) \theta\left(\frac{x_{1,1}}{u_{1,1}}\right) \theta\left(\frac{x_{2,1}}{u_{2,1}}\right) \theta(x_{1,1} z_1) \theta(x_{2,1} z_2) \theta\left(\frac{u_{2,1}}{t_2}\right) \theta(u_{2,1}) \theta(t_{1} t_2)^{2} \theta\left(\frac{u_{2,1}}{u_{1,1} t_2}\right)}{\theta(1)^2 \theta(x_{1,1}) \theta(x_{2,1}) \theta\left(\frac{u_{2,1} x_1}{t_2}\right) \theta(u_{2,1} z_2) \theta\left( \frac{t_1 u_{2,1}}{u_{1,1}} \right)}\ } \\
    & \quad {\color{blue} -\frac{x_{1,1}^2 u_{2,1}}{q^{3/2} x_{2,1}^{3} z_2}
    }
\end{align*}
This expression is not always mathematically well-defined, as can be seen from the ${\color{blue} \theta(1)}$ in the denominator. Nevertheless, the quasiperiods of such an expression are perfectly well-defined.

\subsection{\pkg{Restrict}}
The calling sequence is
$$
\pkg{Restrict(v,w,S)}
$$
where
\begin{itemize}
    \item \pkg{v} and \pkg{w} are as in section \ref{input}.
    \item \pkg{S} indexes a fixed point, as in section \ref{pkgFP}.
\end{itemize}
The procedure \pkg{Restrict(v,w,S)} returns the $\bT$-weights that the Chern roots of the tautological bundles on the variety determined by \pkg{v} and \pkg{w} restrict to at the fixed point determined by \pkg{S}.

\begin{align*}
     &{\color{myred}\pkg{v:=[1,1]:} }\\ 
     &{\color{myred}\pkg{w:=[1,1]:}}\\ 
   & {\color{myred}\pkg{arrows:=[[1,1],[1,1],[1,1]]:}}\\
   & {\color{myred}\pkg{chamb:=[t[2],t[1],u[1,1],u[2,1]:}} \\
   & {\color{myred}\pkg{S:=FixedPoints(v,w,chamb)[1];} }\\
   & {\color{myred}\pkg{Res:=Restrict(v,w,S);}} \\
   & {\color{myred}\pkg{subs(Res,VirtualTangentSpace(v,w));}}\\
   & {\color{myred}\pkg{TangentSpace(v,w,S);} }\\
   & \quad {\color{blue} S:=[[[\,],1,1],[[1,1],2,1]]} \\
   & \quad {\color{blue} Res:=\left\{x_{1,1}=\frac{u_{2,1}}{t_2}, x_{2,1}=u_{2,1}\right\}} \\
   & \quad {\color{blue} t_2^2+\frac{u_{2,1}}{u_{1,1} t_2} + \frac{t_1}{t_2} + \frac{t_1 t_2^2 u_{1,1}}{u_{2,1}} } \\
    & \quad {\color{blue} t_2^2+\frac{u_{2,1}}{u_{1,1} t_2} + \frac{t_1}{t_2} + \frac{t_1 t_2^2 u_{1,1}}{u_{2,1}} }
\end{align*}

\subsection{\pkg{StableEnvelope}}
The calling sequence is
$$
\pkg{StableEnvelope(v,w,S,arrows,chamb)}
$$
where
\begin{itemize}
    \item \pkg{v}, \pkg{w}, \pkg{arrows}, and \pkg{chamb} are as in section \ref{input}.
    \item \pkg{S} is the data describing a fixed point as in section \ref{pkgFP}.
\end{itemize}
The procedure \pkg{StableEnvelope(v,w,S,arrows,chamb)} returns the elliptic stable envelope of the fixed point $\pkg{S}$ of the variety determined by \pkg{v} and \pkg{w} with the corresponding chamber and polarization given by \pkg{chamb} and \pkg{arrows}, as written in Theorem \ref{mainthm}. This is one of the main procedures.

\begin{align*}
    & {\color{myred}\pkg{v:=[2]:} }\\
    &  {\color{myred}\pkg{w:=[1]:}} \\
    &  {\color{myred}\pkg{chamb:=[t[2],t[1],u[1,1]]:}} \\
    &  {\color{myred}\pkg{arrows:=[[1],[1],[1]]:}} \\
    &  {\color{myred}\pkg{S:=FixedPoints(v,w,chamb)[1];}}\\
    &  {\color{myred}\pkg{StableEnvelope(v,w,S,chamb,arrows);} } \\
    & \quad {\color{blue} S:=[[[1,1],1,1]]} \\
        & \quad {\color{blue} \frac{\theta\left(\frac{t_1 t_2 u_{1,1}}{x_{1,2}}\right)\theta(t_2)^2\theta\left(\frac{t_2 x_{1,1}}{x_{1,2}}\right)\theta\left(\frac{x_{1,2} t_2 z_1}{x_{1,1}}\right)\theta\left(\frac{x_{1,1}z_1^2 t_1 t_2}{u_{1,1}}\right)}{\theta\left(\frac{x_{1,1}}{x_{1,2}}\right)\theta\left(\frac{t_1 t_2 x_{1,1}}{x_{1,2}}\right)\theta(z_1) \theta(z_1^2 t_1 t_2)}} \\
       & \quad \quad {\color{blue} +\frac{\theta\left(\frac{t_1 t_2 u_{1,1}}{x_{1,1}}\right)\theta(t_2)^2\theta\left(\frac{t_2 x_{1,2}}{x_{1,1}}\right)\theta\left(\frac{x_{1,1} t_2 z_1}{x_{1,2}}\right)\theta\left(\frac{x_{1,2}z_1^2 t_1 t_2}{u_{1,1}}\right)}{\theta\left(\frac{x_{1,2}}{x_{1,1}}\right)\theta\left(\frac{t_1 t_2 x_{1,2}}{x_{1,1}}\right)\theta(z_1) \theta(z_1^2 t_1 t_2)}}
\end{align*}

\subsection{\pkg{StabMatrix}}
The calling sequence is
$$
\pkg{StabMatrix(v,w,arrows,chamb)}
$$
where
\begin{itemize}
    \item \pkg{v}, \pkg{w}, \pkg{arrows}, and \pkg{chamb} are as in section \ref{input}.
\end{itemize}
The procedure \pkg{StabMatrix(v,w,arrows,chamb)} returns the matrix of restrictions of the elliptic stable envelope of the variety determined by \pkg{v} and \pkg{w} with respect to the chamber \pkg{chamb} and polarization \pkg{arrows}.

\begin{align*}
    &  {\color{myred}\pkg{v:=[2]:}} \\
    &  {\color{myred}\pkg{w:=[1]:} }\\
    &  {\color{myred}\pkg{chamb:=[t[2],t[1],u[1,1]]:}} \\
    &  {\color{myred}\pkg{arrows:=[[1],[1],[1]]:}} \\
    &  {\color{myred}\pkg{StabMatrix(v,w,arrows,chamb);}} \\
    & \quad {\color{blue} \begin{bmatrix}
    \theta(t_2) \theta(t_2^2) & \frac{\theta(t_1t_2)\theta(t_2) \theta\left(\frac{t_1}{t_2}\right) \theta(t_1t_2 z_1) \theta(t_2 z_1^2)}{\theta(t_1)\theta(z_1)\theta(z_1^2 t_1 t_2)}-\frac{\theta(t_2)^2\theta(t_1t_2)\theta\left(\frac{t_1}{t_2z_1}\right)}{\theta(t_1)\theta(z_1)} \\
    0 & -\theta(t_2)\theta\left(\frac{t_1}{t_2}\right)
    \end{bmatrix}}
\end{align*}

\subsection{\pkg{KStableEnvelope}}
The calling sequence is 
$$
\pkg{KStableEnvelope(v,w,S,arrows,chamb,s)}
$$
where
\begin{itemize}
    \item \pkg{v}, \pkg{w}, \pkg{arrows}, \pkg{chamb} are as in section \ref{input}.
    \item \pkg{S} is the data describing a fixed point as in section \ref{pkgFP}.
    \item \pkg{s} is a list of real numbers of length equal to the length of \pkg{v}.
\end{itemize}
The procedure \pkg{KStableEnvelope(v,w,S,arrows,chamb,s)} returns the $K$-theoretic stable envelope of the fixed point \pkg{S} of the variety determined by \pkg{v} and \pkg{w} with the corresponding chamber, polarization, and slope given by \pkg{chamb}, \pkg{arrows}, and \pkg{s}. For a description of slope, see section \ref{Ktheory}.

\begin{align*}
    & {\color{myred}\pkg{v:=[2]:} }\\
    &  {\color{myred}\pkg{w:=[1]:}} \\
    &  {\color{myred}\pkg{chamb:=[t[2],t[1],u[1,1]]:}} \\
    &  {\color{myred}\pkg{arrows:=[[1],[1],[1]]:}} \\
     &  {\color{myred}\pkg{s:=[1/3]:}} \\
    &  {\color{myred}\pkg{S:=FixedPoints(v,w,chamb)[1];}}\\
    &  {\color{myred}\pkg{KStableEnvelope(v,w,S,chamb,arrows,s);} } \\
    & \quad {\color{blue} S:=[[[1,1],1,1]]} \\
        & \quad {\color{blue} \frac{\frac{(t_2x_{1,1}-x_{1,2})(t_2-1)^{2}(t_1t_2u_{1,1}-x_{1,2})\sqrt{x_{1,1}}}{\sqrt{x_{1,2}}(t_1t_2x_{1,1}-x_{1,2})(x_{1,1}-x_{1,2})t_2^2}+\frac{(t_2x_{1,2}-x_{1,1})(t_2-1)^{2}(t_1t_2u_{1,1}-x_{1,1})\sqrt{x_{1,2}}}{\sqrt{x_{1,1}}(t_1t_2x_{1,2}-x_{1,1}(x_{1,2}-x_{1,1})t_2^2}}{\sqrt{t_2^3}} }
\end{align*}

\subsection{\pkg{KStabMatrix}}
The calling sequence is
$$
\pkg{KStabMatrix(v,w,arrows,chamb,s)}
$$
where
\begin{itemize}
    \item \pkg{v}, \pkg{w}, \pkg{arrows}, and \pkg{chamb} are as in section \ref{input}.
    \item \pkg{s} is a list of real numbers of length equal to the length of \pkg{v}.
\end{itemize}
The procedure \pkg{KStabMatrix(v,w,arrows,chamb,s)} returns the matrix of restrictions of the $K$-theoretic stable envelope of the variety determined by \pkg{v} and \pkg{w} with respect to the chamber \pkg{chamb}, polarization \pkg{arrows}, and slope \pkg{s}.

\begin{align*}
    &  {\color{myred}\pkg{v:=[2]:}} \\
    &  {\color{myred}\pkg{w:=[1]:} }\\
    &  {\color{myred}\pkg{chamb:=[t[2],t[1],u[1,1]]:}} \\
    &  {\color{myred}\pkg{arrows:=[[1],[1],[1]]:}} \\
     & {\color{myred}\pkg{s1:=[1/3]}:}\\
      & {\color{myred}\pkg{s2:=[1/2]}:}\\
    &  {\color{myred}\pkg{KStabMatrix(v,w,arrows,chamb,s1);}} \\
     &  {\color{myred}\pkg{KStabMatrix(v,w,arrows,chamb,s2);}} \\
    & \quad {\color{blue} \begin{bmatrix}
  \frac{ (t_2+1)(t_2-1)^2}{t_2^3} & \frac{(t_1t_2-1)(t_2-1)}{t_2^{5/2} \sqrt{t_1}}\\
  0 & -\frac{(t_1-t_2)(t_2-1)}{t_2^3}
    \end{bmatrix}} \\
    & \quad {\color{blue} \begin{bmatrix}
  \frac{ (t_2+1)(t_2-1)^2}{t_2^3} & \frac{(t_1t_2-1)(t_2-1) \sqrt{t_1}(t_2^2 z_1^2-1)}{t_2^{7/2} (t_1 t_2 z_1^2-1)}\\
  0 & -\frac{(t_1-t_2)(t_2-1)}{t_2^3}
    \end{bmatrix}}
\end{align*}

\printbibliography

@article{MO,
author = {Maulik, Davesh and Okounkov, Andrei},
year = {2012},
month = {11},
pages = {},
title = {Quantum Groups and Quantum Cohomology},
volume = {408},
journal = {Astérisque}
}

@incollection{pcmilect,
      author         = "Okounkov, Andrei",
      title          = "{Lectures on K-theoretic computations in enumerative
                        geometry}",
    booktitle= {Geometry of Moduli Spaces and Representation Theory},
    publisher={American Mathematical Society},
    series={IAS/Park City Mathematics Series},
      year           = "2017",
      volume={24}
}

@preamble{
   "\def\cprime{$'$} "
}

@article {Nak1,
    AUTHOR = {Nakajima, Hiraku},
     TITLE = {Quiver varieties and {K}ac-{M}oody algebras},
   JOURNAL = {Duke Math. J.},
    VOLUME = {91},
      YEAR = {1998},
    NUMBER = {3},
     PAGES = {515-560},
}

@article {NakALE,
    AUTHOR = {Nakajima, Hiraku},
     TITLE = {Instantons on {ALE} spaces, quiver varieties, and
              {K}ac-{M}oody algebras},
   JOURNAL = {Duke Math. J.},
  FJOURNAL = {Duke Mathematical Journal},
    VOLUME = {76},
      YEAR = {1994},
    NUMBER = {2},
     PAGES = {365--416},
    %   ISSN = {0012-7094},
     CODEN = {DUMJAO},
   MRCLASS = {53C25 (17B67 58D27 58E15)},
  MRNUMBER = {1302318 (95i:53051)},
MRREVIEWER = {Andrew Dancer},
       DOI = {10.1215/S0012-7094-94-07613-8},
      
}

@online{OS,
      author         = "Okounkov, Andrei and Smirnov, Andrey",
      title          = "{Quantum difference equation for Nakajima varieties}",
      year           = "2016",
      eprint         = "1602.09007",
      archivePrefix  = "arXiv",
      primaryClass   = "math-ph",
    
}

@phdthesis{Shenfeld,
	AUTHOR = {Shenfeld, Daniel},
	TITLE = {Abelianization of stable envelopes in symplectic resolutions},
	NOTE = {ProQuest LLC, Ann Arbor, MI},
	school={Princeton University},
	YEAR = {2013},
	ISBN = {978-1303-45685-5}
}

@online{AOElliptic,
       author = {{Aganagic}, Mina and {Okounkov}, Andrei},
        title = "{Elliptic stable envelopes}",
      journal = {arXiv e-prints},
     keywords = {Mathematics - Algebraic Geometry, High Energy Physics - Theory, Mathematical Physics, Mathematics - Representation Theory},
         year = "2016",
        month = "4",
          eid = {arXiv:1604.00423},
archivePrefix = {arXiv},
       eprint = {1604.00423v4},
 primaryClass = {math.AG},
       adsurl = {https://ui.adsabs.harvard.edu/abs/2016arXiv160400423A},
      adsnote = {Provided by the SAO/NASA Astrophysics Data System}
}

@incollection {GinzburgLectures,
    AUTHOR = {Ginzburg, Victor},
     TITLE = {Lectures on {N}akajima's quiver varieties},
 BOOKTITLE = {Geometric methods in representation theory. {I}},
    SERIES = {S\'emin. Congr.},
    VOLUME = {24},
     PAGES = {145--219},
 PUBLISHER = {Soc. Math. France, Paris},
      YEAR = {2012},
   MRCLASS = {14L24 (16G20 17B67)},
  MRNUMBER = {3202703},
MRREVIEWER = {Xueqing Chen},
}

@article{SmirnovElliptic,
author = {{Smirnov}, Andrey},
year = {2019},
month = {12},
pages = {},
title = {Elliptic stable envelope for Hilbert scheme of points in the plane},
volume = {26},
journal = {Selecta Math.}
}

@incollection {ProudHyper,
	AUTHOR = {Proudfoot, Nicholas J.},
	TITLE = {A survey of hypertoric geometry and topology},
	BOOKTITLE = {Toric topology},
	SERIES = {Contemp. Math.},
	VOLUME = {460},
	PAGES = {323--338},
	PUBLISHER = {Amer. Math. Soc., Providence, RI},
	YEAR = {2008}
}

@article{RTV,
author = {Rimányi, R. and Tarasov, V. and Varchenko, A.},
year = {2017},
month = {05},
pages = {},
title = {Elliptic and K-theoretic stable envelopes and Newton polytopes},
volume = {25},
journal = {Selecta Mathematica}
}

@article {kirv,
	AUTHOR = {McGerty, Kevin and Nevins, Thomas},
	TITLE = {Kirwan surjectivity for quiver varieties},
	JOURNAL = {Invent. Math.},
	VOLUME = {212},
	YEAR = {2018},
	NUMBER = {1},
	PAGES = {161--187}
}

@online{MirSym1,
	author = {{Rim{\'a}nyi}, Rich{\'a}rd and {Smirnov}, Andrey and {Varchenko}, Alexand and {Zhou}, Zijun},
	title = "{3d Mirror Symmetry and Elliptic Stable Envelopes}",
	journal = {arXiv e-prints},
	keywords = {Mathematics - Algebraic Geometry, High Energy Physics - Theory, Mathematical Physics, Mathematics - Representation Theory},
	year = "2019",
	month = 2,
	eid = {arXiv:1902.03677},
	pages = {arXiv:1902.03677},
	archivePrefix = {arXiv},
	eprint = {1902.03677},
	primaryClass = {math.AG}
}

@article{MirSym2,
	author = {{Rim{\'a}nyi}, R. and {Smirnov}, A. and {Varchenko}, A. and {Zhou}, Z.},
	title = "{Three dimensional mirror self-symmetry of the cotangent bundle of the full flag variety}",
	journal = {SIGMA},
	volume = {15},
	keywords = {Mathematics - Algebraic Geometry, High Energy Physics - Theory, Mathematical Physics, Mathematics - Representation Theory},
	year = "2019",
	month = 11,
	pages = {1-22}
}

@article{NakBow,
  title={Cherkis bow varieties and Coulomb branches of quiver gauge theories of affine type A},
  author={H. Nakajima and Yuuya Takayama},
  journal={Selecta Mathematica},
  year={2016},
  volume={23},
  pages={2553-2633}
}

@article{Cherk1,
   title={Moduli Spaces of Instantons on the Taub-NUT Space},
   volume={290},
   DOI={10.1007/s00220-009-0863-8},
   number={2},
   journal={Communications in Mathematical Physics},
   publisher={Springer Science and Business Media LLC},
   author={Cherkis, Sergey A.},
   year={2009},
   month={6},
   pages={719–736}
}

@article{Cherk2,
      title={Instantons on the Taub-NUT Space}, 
      author={Sergey A. Cherkis},
      year={2010},
      journal={Adv. Theor. Math. Phys.},
      number={14},
      pages={609-642},
      month={4}
}

@article{Cherk3,
	author = {{Cherkis}, Sergey A.},
	title = "{Instantons on Gravitons}",
	journal = {Communications in Mathematical Physics},
	year = "2011",
	month = "9",
	volume = {306},
	number = {2},
	pages = {449-483},
	doi = {10.1007/s00220-011-1293-y},
	archivePrefix = {arXiv},
	eprint = {1007.0044},
	primaryClass = {hep-th},
	adsurl = {https://ui.adsabs.harvard.edu/abs/2011CMaPh.306..449C},
	adsnote = {Provided by the SAO/NASA Astrophysics Data System}
}

@article{dinksmir2,
    author = {Dinkins, Hunter and Smirnov, Andrey},
    title = "{Quasimaps to Zero-Dimensional $A_{\infty}$-Quiver Varieties}",
    journal = {International Mathematics Research Notices},
    year = {2020},
    month = {06},
    doi = {10.1093/imrn/rnaa129}
}

@online{dinksmir3,
       author = {{Dinkins}, Hunter and {Smirnov}, Andrey},
        title = "{Capped vertex with descendants for zero dimensional $A_{\infty}$ quiver varieties}",
      journal = {arXiv e-prints},
     keywords = {Mathematics - Algebraic Geometry, Mathematics - Representation Theory},
         year = 2020,
        month = may,
          eid = {arXiv:2005.12980},
        pages = {arXiv:2005.12980},
archivePrefix = {arXiv},
       eprint = {2005.12980},
 primaryClass = {math.AG},
       adsurl = {https://ui.adsabs.harvard.edu/abs/2020arXiv200512980D},
      adsnote = {Provided by the SAO/NASA Astrophysics Data System}
}

@online{KS2,
       author = {{Kononov}, Yakov and {Smirnov}, Andrey},
        title = "{Pursuing quantum difference equations II: 3D-mirror symmetry}",
      journal = {arXiv e-prints},
     keywords = {Mathematics - Algebraic Geometry, High Energy Physics - Theory, Mathematical Physics, Mathematics - K-Theory and Homology, Mathematics - Representation Theory},
         year = 2020,
        month = aug,
          eid = {arXiv:2008.06309},
        pages = {arXiv:2008.06309},
archivePrefix = {arXiv},
       eprint = {2008.06309},
 primaryClass = {math.AG},
       adsurl = {https://ui.adsabs.harvard.edu/abs/2020arXiv200806309K},
      adsnote = {Provided by the SAO/NASA Astrophysics Data System}
}

@online{mstoric,
       author = {{Smirnov}, Andrey and {Zhou}, Zijun},
        title = "{3d Mirror Symmetry and Quantum $K$-theory of Hypertoric Varieties}",
      journal = {arXiv e-prints},
     keywords = {Mathematics - Algebraic Geometry, High Energy Physics - Theory, Mathematical Physics},
         year = 2020,
        month = may,
          eid = {arXiv:2006.00118},
        pages = {arXiv:2006.00118},
archivePrefix = {arXiv},
       eprint = {2006.00118},
 primaryClass = {math.AG},
       adsurl = {https://ui.adsabs.harvard.edu/abs/2020arXiv200600118S},
      adsnote = {Provided by the SAO/NASA Astrophysics Data System}
}

@article{dinkms1,
       author = {{Dinkins}, Hunter},
        title = "{Symplectic Duality of $T^*Gr(k,n)$}",
      journal = {Mathematical Research Letters},
         year = 2021,
        pages = {to appear}
}

@online{msflag,
       author = {{Dinkins}, Hunter},
        title = "{3d mirror symmetry of the cotangent bundle of the full flag variety}",
      journal = {arXiv e-prints},
     keywords = {Mathematics - Algebraic Geometry, Mathematical Physics, Mathematics - Quantum Algebra, Mathematics - Representation Theory},
         year = 2020,
        month = nov,
          eid = {arXiv:2011.08603},
        pages = {arXiv:2011.08603},
archivePrefix = {arXiv},
       eprint = {2011.08603},
 primaryClass = {math.AG},
       adsurl = {https://ui.adsabs.harvard.edu/abs/2020arXiv201108603D},
      adsnote = {Provided by the SAO/NASA Astrophysics Data System}
}

@online{indstab1,
       author = {{Okounkov}, Andrei},
        title = "{Inductive construction of stable envelopes}",
      journal = {arXiv e-prints},
         year = 2020,
        month = jul,
          eid = {arXiv:2007.09094},
        pages = {arXiv:2007.09094},
archivePrefix = {arXiv},
       eprint = {2007.09094},
 primaryClass = {math.AG},
       adsurl = {https://ui.adsabs.harvard.edu/abs/2020arXiv200709094O},
      adsnote = {Provided by the SAO/NASA Astrophysics Data System}
}

@online{RSbows,
       author = {{Rimanyi}, R. and {Shou}, Y.},
        title = "{Bow varieties---geometry, combinatorics, characteristic classes}",
      journal = {arXiv e-prints},
     keywords = {Mathematics - Algebraic Geometry, 14C17, 14E15},
         year = 2020,
        month = dec,
          eid = {arXiv:2012.07814},
        pages = {arXiv:2012.07814},
archivePrefix = {arXiv},
       eprint = {2012.07814},
 primaryClass = {math.AG}
}

@misc{indstab2,
      title={Nonabelian stable envelopes, vertex functions with descendents, and integral solutions of $q$-difference equations}, 
      author={Andrei Okounkov},
      year={2021},
      eprint={2010.13217},
      archivePrefix={arXiv},
      primaryClass={math.AG}
}

@unpublished{dinksmir4,
title={Euler characteristic of stable envelopes},
 author={{Dinkins}, Hunter and {Smirnov}, Andrey},
 note={In preparation}
}

@misc{Botta,
      title={Shuffle products for elliptic stable envelopes of Nakajima varieties}, 
      author={Tommaso Maria Botta},
      year={2021},
      eprint={2104.00976},
      archivePrefix={arXiv},
      primaryClass={math.AG}
}

@article{RWElliptic,
author = {Rimányi, Richárd and Weber, Andrzej},
year = {2021},
month = {02},
title = {Elliptic classes on Langlands dual flag varieties},
journal = {Communications in Contemporary Mathematics},
doi = {10.1142/S0219199721500140}
}

\newpage

\noindent
Hunter Dinkins\\
Department of Mathematics,\\
University of North Carolina at Chapel Hill,\\
Chapel Hill, NC 27599-3250, USA\\
hdinkins@live.unc.edu 

\end{document}